\newcommand{\bl}[1]{\textcolor{blue}{#1}}
\definecolor{mypurple}{rgb}{.4,.0,.5}
\def\w{{\bf w}}
\def\y{{\bf y}}
\def\x{{\bf x}}
\def\x{{\mathbf x}}
\def\w{{\bf w}}
\def\x{{\bf x}}
\def\y{{\bf y}}
\def\be{\begin{equation}}
\def\ee{\end{equation}}
\def\ba{\left[\begin{array}}
\def\ea{\end{array}\right]}
\def\w{{\bf w}}
\def\x{{\bf x}}
\def\y{{\bf y}}
\def\1{{\bf 1}}
\def\g{{\bf g}}
\def\0{{\bf 0}}
\def\erf{\mbox{erf}}
\def\erfc{\mbox{erfc}}
\def\erfi{\mbox{erfi}}
\def\kmu{k_{\mu}}
\def\mR{{\mathbb R}}
\def\mN{{\mathbb N}}
\def\mE{{\mathbb E}}
\def\psiint{\Psi_{int}}
\def\psiext{\Psi_{ext}}
\def\psicom{\Psi_{com}}
\def\phiint{\phi_{int}}
\def\phiext{\phi_{ext}}
\def\calF{{\cal F}}
\def\lp{\left (}
\def\rp{\right )}
\newtheorem{theorem}{Theorem}
\begin{document}

\begin{singlespace}

\title {Box constrained $\ell_1$ optimization in random linear systems -- finite dimensions 
}
\author{
\textsc{Mihailo Stojnic
\footnote{e-mail: {\tt flatoyer@gmail.com}} }}
\date{}
\maketitle

\centerline{{\bf Abstract}} \vspace*{0.1in}

Our companion work \cite{Stojnicl1BnBxasymldp} considers random under-determined linear systems with box-constrained sparse solutions and provides an asymptotic analysis of a couple of modified $\ell_1$ heuristics adjusted to handle such systems (we refer to these modifications of the standard $\ell_1$ as binary and box $\ell_1$). Our earlier work \cite{StojnicISIT2010binary} established that the binary $\ell_1$ does exhibit the so-called phase-transition phenomenon (basically the same phenomenon well-known through earlier considerations to be a key feature of the standard $\ell_1$, see, e.g. \cite{DonohoPol,DonohoUnsigned,StojnicCSetam09,StojnicUpper10}). Moreover, in \cite{StojnicISIT2010binary}, we determined the precise location of the co-called phase-transition (PT) curve. On the other hand, in \cite{Stojnicl1BnBxasymldp} we provide a much deeper understanding of the PTs and do so through a large deviations principles (LDP) type of analysis. In this paper we complement the results of \cite{Stojnicl1BnBxasymldp} by leaving the asymptotic regime naturally assumed in the PT and LDP considerations aside and instead working in a finite dimensional setting. Along the same lines, we provide for both, the binary and the box $\ell_1$, precise finite dimensional analyses and essentially determine their ultimate statistical performance characterizations. On top of that, we explain how the results created here can be utilized in the asymptotic setting, considered in \cite{Stojnicl1BnBxasymldp}, as well. Finally, for the completeness, we also present a collection of results obtained through numerical simulations and observe that they are in a massive agreement with our theoretical calculations.

\vspace*{0.25in} \noindent {\bf Index Terms: Finite dimensions; box-constrained $\ell_1$; linear systems; sparse solutions}.

\end{singlespace}

\section{Introduction}
\label{sec:back}

Linear systems of equations are of course a research arena that has attracted a substantial amount of interests from various scientific fields over last a couple of centuries. Many of their features over time became very prominent research topics and quite a few of them eventually reached the level to even be considered separate scientific fields in their own right. One of them that has been particulary intriguing over last several decades relates to a special type of these systems, namely the under-determined ones with sparse solutions. In this paper we will study precisely such systems while additionally assuming that they have binary/box-constrained solutions. Even in such a particular setup a lot of research has been done and as the problems that we will study here will be pretty much identical to those that we studied in \cite{Stojnicl1BnBxasymldp,StojnicISIT2010binary} we will not go into too much detailing about their various descriptions, history, and importance (instead, for more on these features, we will often refer to \cite{Stojnicl1BnBxasymldp,StojnicISIT2010binary} and references therein). To avoid unnecessary repetitions we below focus just on as minimal problems' reintroductions as we deem needed.

As usual, we assume that there is an $m\times n$ ($m\leq n$) dimensional real system matrix $A$. Also, let $\tilde{\x}$ be an $n$ dimensional real vector (for short we say, $A\in \mR^{m\times n}$ and $\tilde{\x}\in \mR^{n}$). A vector will be called $k$-sparse if no more than $k$ of its entries are different from zero. Set
\begin{equation}
\y\triangleq A\tilde{\x}. \label{eq:defy}
\end{equation}
Given $A$ and $\y$, the standard linear system inverse problem asks to determine $\tilde{\x}$. In other words one would like to solve the following linear system over unknown $\x$
\begin{equation}
A\x=\y. \label{eq:system}
\end{equation}
In the systems that we will consider here two particular types of vectors $\tilde{\x}$ will be of interest: 1) binary $k$-sparse and 2) box-constrained $k$-sparse vectors. Under the binary $k$-sparse we will consider $k$-sparse vectors that have all nonzero components equal to one. On the other hand, under the box-constrained $k$-sparse we will consider vectors that have no more than $k$ components from the interval $(0,1)$ and all other components equal to either zero or to one (there is really nothing too specific about zeros and ones and instead of them one can, for the both of the above definitions, use any two real numbers and everything that we present below can be repeated with rather minimal adjustments). For the binary case, the above linear system problem is often rewritten in the following optimization problem form
\begin{eqnarray}
\mbox{min} & & \|\x\|_{0}\nonumber \\
\mbox{subject to} & & A\x=\y\nonumber \\
&& \x\in \{0,1\}^n. \label{eq:binl0}
\end{eqnarray}
Analogously, for the box-constrained case we have
\begin{eqnarray}
\mbox{min} & & \|\min\{\x,\1-\x\}\|_{0}\nonumber \\
\mbox{subject to} & & A\x=\y\nonumber \\
&& \x\in[0,1]^n, \label{eq:boxl0}
\end{eqnarray}
where we view $\|\x\|_{0}$ as a mathematical object that counts the number of the nonzero entries of $\x$ and $\min$ is applied component-wise. The above problems are of course not super easy and designing efficient (basically fast and if possible polynomial) algorithms that can handle them is of particular interest. In fact, a little bit more is what one typically wants in these types of linear systems problems. Namely, assuming that $\tilde{\x}$ in (\ref{eq:defy}) is $k$-sparse ($k\leq n$) one does not necessarily need the number of equations to be equal to $n$ (from this point on we always assume that $A$ is full rank, either deterministically or when random statistically and that the systems' descriptions are such that the solutions that one is looking for are unique). A smaller number of equations is often sufficient (in fact, in statistical scenarios that we will consider here, the number of equations $m$ can often be as small as $k$). Along these lines, what one is typically looking for when facing problems (\ref{eq:binl0}) and (\ref{eq:boxl0}) is not only a fast/polynomial algorithm but also an algorithm that can recover vector that is $k$-sparse with as few equations as possible. One then can be a bit more greedy and insist that the algorithms \emph{provably} recover $k$-sparse vectors assuming that the system has a certain number of equations. In our view, the best algorithms that fit this description, come from the class of the so-called $\ell_1$-relaxations. For problems (\ref{eq:binl0}) and (\ref{eq:boxl0}) such a relaxation is given by the following (see also, e.g. \cite{Stojnicl1BnBxasymldp,StojnicISIT2010binary} and references therein)
\begin{eqnarray}
\mbox{min} & & \|\x\|_{1}\nonumber \\
\mbox{subject to} & & A\x=\y\nonumber \\
&& \x\in[0,1]^n. \label{eq:l1}
\end{eqnarray}
These types of relaxations have gained a lot popularity in last several decades. In our view that is in particular due to the following two of their features: 1) they are relatively fast and easy to implement as they represent nothing more than simple linear programs and 2) one can characterize in a mathematically rigorous fashion their performance (we should also add that there are quite a few fairly successful algorithms developed over last several decades as alternatives to the $\ell_1$ relaxations, see, e.g. \cite{JATGomp,NeVe07,DTDSomp,NT08,DaiMil08,DonMalMon09}; as mentioned above, our favorites though are precisely the $\ell_1$ relaxations, one of which, (\ref{eq:l1}), we will study here as well).

Before we switch to the presentation of our main results, we will briefly recall on what we believe are mathematically the most important/relevant aspects of the above mentioned performance characterizations that are already known. We first recall that a first substantial progress in recent years in theoretically characterizing $\ell_1$ relaxations appeared in \cite{CRT,DOnoho06CS} where in a statistical scenario it was shown that the standard $\ell_1$ (basically the one from (\ref{eq:l1}) without the interval constraint) can recover a linear sparsity (recovering linear sparsity simply means that if the system's dimensions are large and $m$ is proportional to $n$ then there is a $k$ also proportional to $n$ such that the $\ell_1$ can recover it). On the other hand, \cite{DonohoPol,DonohoUnsigned,StojnicCSetam09,StojnicUpper10} went much further and replaced a qualitative linearity characterization with the exact characterizations of the so-called $\ell_1$'s phase transitions (PT). Finally,  \cite{Stojnicl1RegPosasymldp} provided a complete characterization of a much stronger $\ell_1$'s large deviations principle (LDP). In \cite{StojnicISIT2010binary} it was observed that the concepts developed in \cite{StojnicCSetam09,StojnicUpper10}, can be used to analyze not only the standard $\ell_1$ but also the one that is given in (\ref{eq:l1}) (which we ill refer to as the binary $\ell_1$ when using it as a relaxation to solve (\ref{eq:binl0}) and as the box $\ell_1$ when using is as a relaxation to solve (\ref{eq:boxl0})). Moreover, \cite{StojnicISIT2010binary} determined the exact binary $\ell_1$ phase transition curve (PT curve). In our companion paper \cite{Stojnicl1BnBxasymldp}, we go much further and provide the exact binary $\ell_1$'s LDP characterization as well. Moreover, in \cite{Stojnicl1BnBxasymldp}, we also look at the box $\ell_1$ and settle its LDP characterization as well (as a simple consequence of the provided LDP analysis we then in \cite{Stojnicl1BnBxasymldp} determine the box $\ell_1$'s PT curve as well). Since both, the PT and the LDP concepts, are naturally related to large dimensional systems, our presentation in \cite{Stojnicl1BnBxasymldp} puts an emphasis on asymptotic (infinite dimensional) regimes. In this paper we go in a different direction and complement such a view by considering the finite dimensional settings. We provide exact finite dimensional performance characterizations for both, the binary and the box $\ell_1$. For both of these characterizations we also show how they can be transformed to bridge towards the asymptotic regime considered in \cite{Stojnicl1BnBxasymldp}.

The paper's presentation will be split into two main sections. The first one will deal with the binary $\ell_1$ and the second will deal with the box $\ell_1$.


\section{Binary $\ell_1$}
\label{sec:posl1}

In this section we discuss the binary $\ell_1$. Without loss of generality we will assume that the vector $\x$ that is the solution of (\ref{eq:binl0}) is such that its elements $\x_{k+1},\x_{k+2},\dots,\x_{n}$ are equal to zero and its elements $\x_{1},\x_{2},\dots,\x_k$ are equal to one. The following result is established in \cite{StojnicISIT2010binary} and is basically a binary adaptation of a result proven for the general $\ell_1$ in \cite{StojnicCSetam09,StojnicICASSP09}. As it will be clear later on, precisely this result is at the heart of everything that we were able to prove.
\begin{theorem}(\cite{StojnicISIT2010binary} Nonzero elements of $\x$ have fixed locations and are equal to one)
Assume that an $m\times n$ system matrix $A$ is given. Let $\x$
be a binary $k$-sparse vector. Also let $\x_{k+1}=\x_{k+2}=\dots=\x_{n}=0$. Clearly, $\x_{1}=\x_{2}=\dots=\x_k=1$. Further, assume that $\y=A\x$ and that $\w$ is
a $n\times 1$ vector such that $\w_i\leq 0, 1\leq i\leq k,$ and $\w_i\geq 0, k+1\leq i\leq n$. If
\begin{equation}
(\forall \w\in \textbf{R}^n | A\w=0) \quad  -\sum_{i=1}^{k} \w_i<\sum_{i=k+1}^{n}\w_{i},
\label{eq:posthmcond1}
\end{equation}
then the solutions of (\ref{eq:binl0}) and (\ref{eq:l1}) coincide. Moreover, if
\begin{equation}
(\exists \w\in \textbf{R}^n | A\w=0) \quad  -\sum_{i=1}^{k} \w_i\geq \sum_{i=k+1}^{n}\w_{i},
\label{eq:posthmcond2}
\end{equation}
then the binary $k$-sparse $\x$ is the solution of (\ref{eq:binl0}) and is not the solution of (\ref{eq:l1}).
\label{thm:posthmregposcond}
\end{theorem}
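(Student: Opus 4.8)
The plan is to reduce both halves of the statement to a single elementary fact: because the designated solution $\x$ of (\ref{eq:binl0}) sits at a vertex of the cube $[0,1]^n$ (entries $1$ on $\{1,\dots,k\}$, $0$ elsewhere), every feasible point of the relaxation (\ref{eq:l1}) is $z=\x+\w$ with $A\w=0$ and with a \emph{forced} coordinate-wise sign pattern, $\w_i\in[-1,0]$ for $1\le i\le k$ and $\w_i\in[0,1]$ for $k+1\le i\le n$, along which the $\ell_1$ objective is affine. First I would record this feasibility equivalence and then, using the forced signs to drop the absolute values, compute
\[
\|\x+\w\|_1=\sum_{i=1}^{k}(1+\w_i)+\sum_{i=k+1}^{n}\w_i=k+\Big(\sum_{i=1}^{k}\w_i+\sum_{i=k+1}^{n}\w_i\Big)=\|\x\|_1+\Big(\sum_{i=1}^{k}\w_i+\sum_{i=k+1}^{n}\w_i\Big).
\]
Thus $\|\x+\w\|_1>\|\x\|_1$ exactly when $-\sum_{i=1}^k\w_i<\sum_{i=k+1}^n\w_i$, and $\|\x+\w\|_1\le\|\x\|_1$ exactly when $-\sum_{i=1}^k\w_i\ge\sum_{i=k+1}^n\w_i$. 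Throughout I would read the quantifiers in (\ref{eq:posthmcond1}) and (\ref{eq:posthmcond2}) as ranging over the \emph{nonzero} $\w\in\mathbf{R}^n$ with $A\w=0$ that carry the sign pattern stipulated just before (\ref{eq:posthmcond1}); taken over all of $\mathbf{R}^n$ the hypotheses would be degenerate, since $\w=\mathbf 0$ defeats (\ref{eq:posthmcond1}) and trivially satisfies (\ref{eq:posthmcond2}).

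For the first claim, assume (\ref{eq:posthmcond1}). Any feasible $z\neq\x$ of (\ref{eq:l1}) is $z=\x+\w$ with $\w\in\ker A$ nonzero and carrying the stipulated signs (its extra magnitude bound $|\w_i|\le 1$ being immaterial, as (\ref{eq:posthmcond1}) is posited for all magnitudes), so the bracket in the display is strictly positive and $\|z\|_1>\|\x\|_1$; hence $\x$ is the \emph{unique} minimizer of (\ref{eq:l1}). The same bookkeeping applied to a competing \emph{binary} feasible $z\neq\x$ of (\ref{eq:binl0}) gives $\|z\|_0=k+\big(\sum_{i=1}^k\w_i+\sum_{i=k+1}^n\w_i\big)$ (here each $-\w_i$ with $i\le k$, and each $\w_i$ with $i>k$, lies in $\{0,1\}$), which by (\ref{eq:posthmcond1}) exceeds $k=\|\x\|_0$; so $\x$ is also the unique minimizer of (\ref{eq:binl0}). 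Both solution sets equal $\{\x\}$ and therefore coincide.

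For the second claim, let $\w\neq\mathbf 0$ realize (\ref{eq:posthmcond2}) and put $z=\x+\epsilon\w$ with $\epsilon>0$ small enough that $\epsilon\w_i\in[-1,0]$ for $i\le k$ and $\epsilon\w_i\in[0,1]$ for $i>k$ (possible precisely because $\w$ already has the correct signs; any $\epsilon\le 1/\max_i|\w_i|$ works). Then $z$ is feasible for (\ref{eq:l1}) and $z\neq\x$, while the displayed identity gives $\|z\|_1=k+\epsilon\big(\sum_{i=1}^k\w_i+\sum_{i=k+1}^n\w_i\big)\le k=\|\x\|_1$ by the defining inequality of (\ref{eq:posthmcond2}). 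Hence $\x$ is not the unique solution of (\ref{eq:l1}) --- and not a solution at all when that inequality is strict --- whereas $\x$ is still a solution of (\ref{eq:binl0}), which is the standing assumption placed on $\x$ in this section.

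I do not expect a serious obstacle: the whole argument rests on the vertex/forced-sign observation, which linearizes $\|\cdot\|_1$ over the feasible set and turns optimality into a single linear inequality per null-space direction, after which each half is a one-line computation. What genuinely needs care is bookkeeping --- that the hypotheses must be understood as quantifying over nonzero $\w$ with the prescribed signs; that in the second claim the assertion "$\x$ solves (\ref{eq:binl0})" is inherited from the section's setup rather than re-derived from (\ref{eq:posthmcond2}) alone; and that the shortened direction $\epsilon\w$ is feasible exactly because the signs of $\w$ were fixed from the start.
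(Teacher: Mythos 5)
Your proof is correct and takes essentially the approach the paper intends: it states this theorem without proof, deferring to \cite{StojnicISIT2010binary}, and the underlying argument there is exactly this forced-sign/null-space computation that linearizes $\|\x+\w\|_1$ over the feasible perturbations. Your reading of the quantifiers (nonzero $\w\in\ker A$ carrying the stipulated sign pattern) and your explicit handling of the non-strict boundary case in the converse direction are the intended interpretations, so no gap remains.
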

Below we will try to follow the strategy presented in \cite{Stojnicl1RegPosfinn}. To that end we will also try to avoid repeating arguments for many of the concepts introduced in  \cite{Stojnicl1RegPosfinn} that directly apply to the binary $\ell_1$ considered here. Instead we will emphasize those that are different. To facilitate the exposition we set
\begin{equation}
C^{(bin)}_w\triangleq\{\w\in \mR^n| \quad -\sum_{i=1}^k \w_i\geq \sum_{i=k+1}^{n}\w_{i},\w_i\geq 0,k+1\leq i\leq n,\w_i\leq 0,1\leq i\leq k\}.\label{eq:posdefSw}
\end{equation}
It is obvious that $C^{(bin)}_w$ is a polyhedral cone. Assuming that $A$ is random and that it has say i.i.d. standard normal components (alternatively one can consider $A$ that has the null-space uniformly distributed in the corresponding Grassmanian) and following \cite{Stojnicl1RegPosfinn} we have
\begin{equation}
p^{(bin)}_{err}(k,m,n)=2\sum_{l=m+2j+1,j\in \mN_0}^{n} \sum_{F^{(l,bin)}\in \calF^{(l,bin)}}\phiint(0,F^{(l,bin)})\phiext(F^{(l,bin)},C^{(bin)}_w),\label{eq:posanal3}
\end{equation}
where $p^{(bin)}_{err}(k,m,n)$ is the probability that the solution of (\ref{eq:l1}) is not the binary $k$-sparse solution of (\ref{eq:binl0}), $F^{(l,bin)}$ is an $l$-face of $C^{(bin)}_w$, $\calF^{(l,bin)}$ is the set of all $l$-faces of $C^{(bin)}_w$, and $\phiint(\cdot,\cdot)$ and $\phiext(\cdot,\cdot)$ are the so-called internal and external angles, respectively (as mentioned earlier, we will often assume a substantial level of familiarity with what is presented in \cite{Stojnicl1RegPosfinn}; along the same lines more on the faces and angles of polyhedral cones of similar type can be found in \cite{Stojnicl1RegPosfinn} and various references therein). As emphasized on a multitude of occasions in \cite{Stojnicl1RegPosfinn},
(\ref{eq:posanal3}) is a nice conceptual characterization of $p^{(bin)}_{err}(k,m,n)$. However, to be able to use it one would need the angles $\phiint(\cdot,\cdot)$ and $\phiext(\cdot,\cdot)$ as well. Below we compute these angles. We split the presentation into two parts, the first one that relates to the internal angles and the second one that deals with the external angles.

\subsection{Internal angles}
\label{sec:posintang}

The internal angles,  $\phiint(0,F^{(l,bin)})$, will be the subject of this section. First we note that there are several different types of $l$-faces of $C^{(bin)}_w$ and we split the set of all $l$-faces $\calF^{(l,bin)}$ into two sets, $\calF^{(l,bin)}_1$ and $\calF^{(l,bin)}_2$ in the following way. Set
\begin{eqnarray}
I_l & \triangleq  & \{1,2,\dots,k\} \nonumber \\
I_r & \triangleq  & \{k+1,k+2,\dots,n\} \nonumber \\
l_1^{(min)} & \triangleq & \max(0,n-l-1-(n-k)) \nonumber \\
l_1^{(max)} & \triangleq & \min(k-1,n-l-1) \nonumber \\
l_2^{(min)} & \triangleq & \max(0,n-l-(n-k)) \nonumber \\
l_2^{(max)} & \triangleq & \min(k-1,n-l),\label{eq:posintanal1}
\end{eqnarray}
and write
\begin{equation}
\calF^{(l,bin)}_1 \triangleq \cup_{l_1=l_1^{(min)}}^{l_1^{(max)}}\calF^{(l,l_1,bin)}_1, l\in \{0,1,\dots,n-1\},\label{eq:posintanal1a}
\end{equation}
where
\begin{multline}
\calF^{(l,l_1,bin)}_1 \triangleq\{\w\in \mR^n| \quad -\sum_{i=1}^k \w_i= \sum_{i=k+1}^{n}\w_{i},\w_{I_l}\leq 0,\w_{I_r}\geq 0, \\
\w_{I^{(l,l_1,bin)}_{1,r}}=0,I^{(l,l_1,bin)}_{1,r}\subset I_r,|I^{(l,l_1,bin)}_{1,r}|=n-l-1-l_1 \quad \mbox{and} \quad \w_{I^{(l,l_1,bin)}_{1,l}}=0,I^{(l,l_1,bin)}_{1,l}\subset I_l,|I^{(l,l_1,bin)}_{1,l}|=l_1\},\label{eq:posintanal2}
\end{multline}
and
\begin{equation}
\calF^{(l,bin)}_2 \triangleq \cup_{l_2=l_2^{(min)}}^{l_2^{(max)}}\calF^{(l,l_2,bin)}_2, l\in \{1,2,\dots,n\},\label{eq:posintanal2a}
\end{equation}
where
\begin{multline}
\calF^{(l,l_2,bin)}_1 \triangleq\{\w\in \mR^n| \quad -\sum_{i=1}^k \w_i\geq \sum_{i=k+1}^{n}\w_{i},\w_{I_l}\leq 0,\w_{I_r}\geq 0, \\
\w_{I^{(l,l_2,bin)}_{2,r}}=0,I^{(l,l_2,bin)}_{2,r}\subset I_r,|I^{(l,l_2,bin)}_{2,r}|=n-l-l_1 \quad \mbox{and} \quad \w_{I^{(l,l_2,bin)}_{2,l}}=0,I^{(l,l_2,bin)}_{2,l}\subset I_l,|I^{(l,l_2,bin)}_{2,l}|=l_1\},\label{eq:posintanal3}
\end{multline}
where notation that includes indexing by a set is as in \cite{Stojnicl1RegPosfinn}. It is not that hard to see that the cardinalities of sets $\calF^{(l,l_1,bin)}_1$ and $\calF^{(l,l_1bin)}_2$ are given by
\begin{equation}
c^{(l,l_1,bin)}_1\triangleq |\calF^{(l,l_1,bin)}_1|=\binom{k}{l_1}\binom{n-k}{n-l-1-l_1},\label{eq:posintanal4}
\end{equation}
and
\begin{equation}
c^{(l,l_2,bin)}_2\triangleq |\calF^{(l,l_2,bin)}_2|=\binom{k}{l_2}\binom{n-k}{n-l-l_2}.\label{eq:posintanal5}
\end{equation}
Using all of the above, (\ref{eq:posanal3}) can be transformed to
\begin{eqnarray}
p^{(bin)}_{err}(k,m,n) & = & 2\sum_{l=m+2j+1,j\in \mN_0}^{n} ( \sum_{F^{(l,bin)}_1\in \calF^{(l,bin)}_1}\phiint(0,F^{(l,bin)}_1)\phiext(F^{(l,bin)}_1,C^{(bin)}_w)\nonumber \\
& & + \sum_{F^{(l,bin)}_2\in \calF^{(l,bin)}_2}\phiint(0,F^{(l,bin)}_2)\phiext(F^{(l,bin)}_2,C^{(bin)}_w))\nonumber \\
& = & 2 ( \sum_{l=m+2j+1,j\in \mN_0}^{n-1} \sum_{l_1=l^{(min)}_1}^{l^{(max)}_1} c^{(l,l_1,bin)}_1\phiint(0,F^{(l,l_1,bin)}_1)\phiext(F^{(l,l_1,bin)}_1,C^{(bin)}_w) \nonumber \\
& & + \sum_{l=m+2j+1,j\in \mN_0}^{n} \sum_{l_2=l^{(min)}_2}^{l^{(max)}_2} c^{(l,l_2,bin)}_2 \phiint(0,F^{(l,l_2,bin)}_2)\phiext(F^{(l,l_2,bin)}_2,C^{(bin)}_w)),\nonumber \\
\label{eq:posintanal6}
\end{eqnarray}
Now, one can apply the same line of thinking as in \cite{Stojnicl1RegPosfinn} and due to symmetry conclude that $F^{(l,l_1,bin)}_1$ and $F^{(l,l_2,bin)}_2$ are basically any of the elements from sets $\calF^{(l,l_1,bin)}_1$ and $\calF^{(l,l_2,bin)}_2$, respectively. For the concreteness we choose
\begin{eqnarray}
  I^{(l,l_1,bin)}_{1,l} & = & \{1,2,\dots,l_1\} \nonumber\\
  I^{(l,l_1,bin)}_{1,r} & = & \{l+l_1+2,l+l_1+3,\dots,n\} \nonumber\\
  I^{(l,l_2,bin)}_{2,l} & = & \{1,2,\dots,l_2\} \nonumber\\
  I^{(l,l_2,bin)}_{2,r} & = & \{l+l_2+1,l+l_2+2,\dots,n\},\label{eq:posintanal6a}
\end{eqnarray}
and consequently
\begin{equation}
F^{(l,l_1,bin)}_1 =\{\w\in \mR^n| \quad -\sum_{i=l_1+1}^{k} \w_i= \sum_{i=k+1}^{l+1+l_1}\w_{i},\w_{l_1+1:k}\leq 0,\w_{1:l_1}=0,\w_{k+1:l+l_1+1}\geq 0,\w_{l+l_1+2:n}=0\},\label{eq:posintanal7}
\end{equation}
and
\begin{equation}
F^{(l,l_2,bin)}_2 =\{\w\in \mR^n| \quad -\sum_{i=l_2+1}^{k} \w_i\geq \sum_{i=k+1}^{l+l_2}\w_{i},\w_{l_2+1:k}\leq 0,\w_{1:l_2}=0,\w_{k+1:l+l_2}\geq 0,\w_{l+l_2+1:n}=0\}.\label{eq:posintanal8}
\end{equation}
Below we separately present the computations of $\phiint(0,F^{(l,l_1,bin)}_1)$ and $\phiint(0,F^{(l,l_2,bin)}_2)$.

\subsubsection{Computing $\phiint(0,F^{(l,l_1,bin)}_1)$}
\label{sec:posint1ang}

We will rely on the ``Gaussian coordinates in an orthonormal basis" (GCOB) strategy that we introduced in \cite{Stojnicl1RegPosfinn} to
compute $\phiint(0,F^{(l,l_1,bin)}_1)$. As usual we will skip all the repetitive details from \cite{Stojnicl1RegPosfinn} and showcase only the key differences. For the orthonormal basis we will use the column vectors of the following matrix
\begin{equation}
B^{(l,l_1,bin)}_{int,1}=\begin{bmatrix}
            \0_{l_1\times (l-1)} & \0_{l_1\times 1} \\
            \begin{bmatrix}
              B \\
              \0_{1\times (l-1)}
            \end{bmatrix}  & \begin{bmatrix}
              -\1_{l\times 1} \\
              l
            \end{bmatrix}\frac{1}{\sqrt{l^2+l}}\\
            \0_{(n-l-1-l_1)\times (l-1)} & \0_{(n-l-1-l_1)\times 1}
          \end{bmatrix}.\label{eq:posint1anal1}
\end{equation}
where $B$ is an $l\times (l-1)$ orthonormal matrix such that $\1_{1\times l} B=\0_{(l-1)\times 1}$, and $\1$ and $\0$ are matrices of all ones or zeros, respectively of the sizes given in their indexes. It is rather clear that $(B^{(l,l_1,bin)}_{int,1})^TB^{(l,l_1,bin)}_{int,1}=I$ and that  $F^{(l,l_1,bin)}_1$ is in the subspace spanned by the columns of $B^{(l,l_1,bin)}_{int,1}$. Now, while matrix $B^{(l,l_1,bin)}_{int,1}$ is different from the corresponding one in \cite{Stojnicl1RegPosfinn} we managed to design it so that it is for our purposes here basically the same as the corresponding one in \cite{Stojnicl1RegPosfinn}. Namely, carefully looking at $B^{(l,l_1,bin)}_{int,1}$ one can note that if its first $l_1$ rows are moved to become the last $l_1$ rows then $B^{(l,l_1,bin)}_{int,1}$ becomes exactly the same as the matrix that we dealt with in \cite{Stojnicl1RegPosfinn}. Such an observation is critical and enables us to immediately borrow many relevant conclusions that we achieved in \cite{Stojnicl1RegPosfinn}. Below we quickly sketch how that can be done.

As explained in \cite{Stojnicl1RegPosfinn}, the second part of the above mentioned ``Gaussian coordinates in an orthonormal basis" strategy asssumes that one works with the i.i.d. standard normal coordinates $\g$ to parameterize $F^{(l,l_1,bin)}_1$ in basis $B^{(l,l_1,bin)}_{int,1}$. In other words, one is looking for the set of $\g$'s (say, $G^{(l,l_1,bin)}_1$) such that $B^{(l,l_1,bin)}_{int,1}\g\in F^{(l,l_1,bin)}_1$. Basically, the following set of $\g$'s, $G^{(l,l_1,bin)}_1$, will be allowed
\begin{equation}
G^{(l,l_1,bin)}_1 =\{\g\in \mR^l| \w=B^{(l,l_1,bin)}_{int,1}\g \quad \mbox{and}\quad \w_{l_1+1:k}\leq 0,\w_{k+1:l+l_1+1}\geq 0\}.\label{eq:posint1anal2}
\end{equation}
Based on (\ref{eq:posint1anal1}), (\ref{eq:posint1anal2}) can be written in the following equivalent form as well
\begin{equation}
G^{(l,l_1,bin)}_1 =\{\g\in \mR^l| \w_{l_1+1:(l+l_1+1)}=\begin{bmatrix}
            \begin{bmatrix}
              B \\
              \0_{1\times (l-1)}
            \end{bmatrix}  & \begin{bmatrix}
              -\1_{l\times 1} \\
              l
            \end{bmatrix}\frac{1}{\sqrt{l^2+l}}
          \end{bmatrix}\g \quad \mbox{and}\quad \w_{l_1+1:k}\leq 0,\w_{k+1:l+l_1+1}\geq 0\}.\label{eq:posint1anal3}
\end{equation}
Now we can repeat step by step the remaining considerations from \cite{Stojnicl1RegPosfinn} to finally arrive at the following characterization of $\phiint(0,F^{(l,l_1,bin)}_1)$
\begin{eqnarray}
\phiint(0,F^{(l,l_1,bin)}_1) &  = & \frac{J}{(2\pi)^{\frac{l}{2}}}\int_{-\1_{1\times l}\w_{l_1+1:l+l_1}\geq 0,\w_{l_1+1:k}\leq 0,\w_{k+1:l+l_1}\geq 0}e^{-\frac{\|\w_{l_1+1:l+l_1}\|^2+
             ((-\1_{1\times l})\w_{l_1+1:l+l_1})^2}{2}}d\w_{l_1+1:l+l_1}\nonumber \\
&  = & \frac{J2^l}{2^l(2\pi)^{\frac{l}{2}}}\int_{-\1_{1\times l}\w_{l_1+1:l+l_1}\geq 0,\w_{l_1+1:k}\leq 0,\w_{k+1:l+l_1}\geq 0}e^{-\frac{\|\w_{l_1+1:l+l_1}\|^2+
             ((-\1_{1\times l})\w_{l_1+1:l+l_1})^2}{2}}d\w_{l_1+1:l+l_1}\nonumber \\
&  = & \frac{J}{2^{l}}\int_{x\geq 0}f_{x^{(bin)}}(x)e^{-\frac{x^2}{2}}dx,\label{eq:posint1anal8}
\end{eqnarray}
where $J=\sqrt{l+1}$ and $f_{x^{(bin)}}(\cdot)$ is the pdf of the random variable $x^{(bin)}=-\1_{1\times l}\w_{l_1+1:l+l_1}$, where $\w_i,l_1+1 \leq i\leq k$, are  i.i.d. negative standard half-normals and $\w_i,k+1 \leq i\leq l+l_1$, are  i.i.d. standard half-normals. To determine $f_{x^{(bin)}}(\cdot)$, we will first compute the characteristic function of $x^{(bin)}$. To that end we have
\begin{eqnarray}
E e^{itx^{(bin)}} & = & \frac{2^{l}}{(2\pi)^{\frac{l}{2}}}\int_{\w_{l_1+1:k}\leq 0,\w_{k+1:l+l_1}\geq 0}e^{-\frac{\w_{l_1+1:l+l_1}^T\w_{l_1+1:l+l_1}}{2}-it\1_{1\times l}\w_{l_1+1:l+l_1}}d\w_{l_1+1:l+l_1} \nonumber \\
& = & \lp 1+i\erfi\lp\frac{t}{\sqrt{2}}\rp\rp^{k-l_1} \lp 1-i\erfi\lp\frac{t}{\sqrt{2}}\rp\rp^{l-k+l_1} e^{-\frac{lt^2}{2}},\label{eq:posint1anal9}
\end{eqnarray}
and
\begin{equation}
f_{x^{(bin)}}(x)=\frac{1}{2\pi}\int_{-\infty}^{\infty} \lp 1+i\erfi\lp\frac{t}{\sqrt{2}}\rp\rp^{k-l_1} \lp 1-i\erfi\lp\frac{t}{\sqrt{2}}\rp\rp^{l-k+l_1} e^{-\frac{lt^2}{2}}e^{-itx}dt,\label{eq:posint1anal10}
\end{equation}
where
\begin{equation}
\erfi(y)=-i\erf(iy)=\frac{2}{\sqrt{\pi}}\int_{0}^{y}e^{\frac{z^2}{2}}dz.\label{eq:posint1anal10a}
\end{equation}
Combining (\ref{eq:posint1anal8}) and (\ref{eq:posint1anal10}) we obtain
\begin{eqnarray}
\phiint(0,F^{(l,l_1,bin)}_1) &  = &  \frac{J}{2^{l}}\int_{x\geq 0}f_{x^{(bin)}}(x)e^{-\frac{x^2}{2}}dx\nonumber \\
& = & \frac{J}{2^{l}}\int_{x\geq 0}\frac{1}{2\pi}\int_{-\infty}^{\infty}
\lp 1+i\erfi\lp\frac{t}{\sqrt{2}}\rp\rp^{k-l_1}
\lp 1-i\erfi\lp\frac{t}{\sqrt{2}}\rp\rp^{l-k+l_1} e^{-\frac{lt^2}{2}}e^{-itx}dt e^{-\frac{x^2}{2}}dx\nonumber \\
& = & \frac{J}{2^{l+1}\sqrt{2\pi}}\int_{-\infty}^{\infty}\frac{2}{\sqrt{2\pi}}\int_{x\geq 0} \lp 1+i\erfi\lp\frac{t}{\sqrt{2}}\rp\rp^{k-l_1} \lp 1-i\erfi\lp\frac{t}{\sqrt{2}}\rp\rp^{l-k+l_1} e^{-\frac{lt^2}{2}}e^{-itx} e^{-\frac{x^2}{2}}dxdt\nonumber \\
& = & \frac{J}{2^{l+1}\sqrt{2\pi}}\int_{-\infty}^{\infty} \lp1+i\erfi\lp\frac{t}{\sqrt{2}}\rp\rp^{k-l_1} \lp1-i\erfi\lp\frac{t}{\sqrt{2}}\rp\rp^{l-k+l_1+1} e^{-\frac{(l+1)t^2}{2}}dt.
\label{eq:posint1anal11}
\end{eqnarray}

\subsubsection{Computing $\phiint(0,F^{(l,l_2,bin)}_2)$}
\label{sec:posint2ang}

As observed in \cite{Stojnicl1RegPosfinn}, when computing $\phiint(0,F^{(l,l_2,bin)}_2)$ there is no need to change variables and find a new orthonormal basis. Instead analogously to (\ref{eq:posint1anal8}) we have
\begin{eqnarray}
\phiint(0,F^{(l,l_2,bin)}_2) &  = & \frac{1}{(2\pi)^{\frac{l}{2}}}\int_{-\1_{1\times l}\w_{l_1+1:l+l_1}\geq 0,\w_{l_1+1:k}\leq 0,\w_{k+1:l+l_1}\geq 0}e^{-\frac{\w_{l_1+1:l+l_1}^T\w_{l_1+1:l+l_1}}{2}}d\w_{l_1+1:l+l_1}\nonumber \\
& = & \frac{2^l}{2^l(2\pi)^{\frac{l}{2}}}\int_{-\1_{1\times l}\w_{l_1+1:l+l_1}\geq 0,\w_{l_1+1:k}\leq 0,\w_{k+1:l+l_1}\geq 0}e^{-\frac{\w_{l_1+1:l+l_1}^T\w_{l_1+1:l+l_1}}{2}}d\w_{l_1+1:l+l_1}\nonumber \\
&  = & \frac{1}{2^{l}}\int_{x\geq 0}f_{x^{(bin)}}(x)dx.\label{eq:posint2anal8}
\end{eqnarray}
Combining (\ref{eq:posint1anal10}) and (\ref{eq:posint2anal8}) we also have
\begin{eqnarray}
\phiint(0,F^{(l,l_2,bin)}_2) &  = &  \frac{1}{2^{l}}\int_{x\geq 0}f_{x^{(bin)}}(x)dx\nonumber \\
& = & \frac{1}{2^{l}}\int_{x\geq 0}\frac{1}{2\pi}\int_{-\infty}^{\infty}
\lp1+i\erfi\lp\frac{t}{\sqrt{2}}\rp\rp^{k-l_1}
\lp1-i\erfi\lp\frac{t}{\sqrt{2}}\rp\rp^{l-k+l_1} e^{-\frac{lt^2}{2}}e^{-itx}dt dx.\nonumber \\
\label{eq:posint2anal11}
\end{eqnarray}
Although (\ref{eq:posint2anal11}) can be used to compute $\phiint(0,F^{(l,l_2,bin)}_2)$ one can transform it a bit further
\begin{eqnarray}
\phiint(0,F^{(l,l_2,bin)}_2)
& = & \frac{1}{2^{l}}\int_{x\geq 0}\frac{1}{2\pi}\int_{-\infty}^{\infty}
\lp1+i\erfi\lp\frac{t}{\sqrt{2}}\rp\rp^{k-l_1}
\lp1-i\erfi\lp\frac{t}{\sqrt{2}}\rp\rp^{l-k+l_1} e^{-\frac{lt^2}{2}}e^{-itx}dt dx\nonumber \\
& = & \frac{1}{2^{l}}\frac{1}{2\pi}\lim_{x\rightarrow \infty}\lim_{\epsilon\rightarrow 0_+}(\int_{-\infty}^{-\epsilon} \lp1+i\erfi\lp\frac{t}{\sqrt{2}}\rp\rp^{k-l_1}\lp1-i\erfi\lp\frac{t}{\sqrt{2}}\rp\rp^{l-k+l_1} e^{-\frac{lt^2}{2}}\frac{(1-e^{-itx})}{it}dt\nonumber \\
& & +\int_{\epsilon}^{\infty} \lp1+i\erfi\lp\frac{t}{\sqrt{2}}\rp\rp^{k-l_1} \lp1-i\erfi\lp\frac{t}{\sqrt{2}}\rp\rp^{l-k+l_1} e^{-\frac{lt^2}{2}}\frac{(1-e^{-itx})}{it}dt).
\label{eq:posint2anal12}
\end{eqnarray}


\subsection{External angles}
\label{sec:posextang}

Computation of the external angles will also differ depending on the type of the face for which the angle is computed. We will therefore separately handle $\phiext(F^{(l,l_1,bin)}_1,C^{(bin)}_w)$ and $\phiext(F^{(l,l_2,bin)}_2,C^{(bin)}_w)$.

\subsubsection{Computing $\phiext(F^{(l,l_1,bin)}_1,C^{(bin)}_w)$}
\label{sec:posext1ang}

As was the case in \cite{Stojnicl1RegPosfinn} here, the external angles can also be handled through the GCOB strategy. Following \cite{Stojnicl1RegPosfinn} we first observe that for face $F^{(l,l_1,bin)}_1$ we have the corresponding positive hull of the outward normals to the meeting hyperplanes given by
\begin{equation}\label{eq:posext1anal1}
phull^{(l,l_1,bin)}_{ext,1}\triangleq  -pos(-e_{1},-e_{2},\dots,-e_{l_1},e_{l+l_1+2},e_{l+l_1+3},\dots,e_{n},-\1_{n\times 1}),
\end{equation}
where, as usual, $e_i\in \mR^n$ and its only nonzero component is at the $i$-th location and is equal to one ($pos(\cdot,\cdot,\dots,\cdot)$ obviously stands for the positive hull of the vectors inside the parenthesis; to facilitate writing we will work with $-phull^{(l,l_1,bin)}_{ext,1}$; due to obvious symmetry there is really no difference for our purposes here between $-phull^{(l,l_1,bin)}_{ext,1}$ and $phull^{(l,l_1,bin)}_{ext,1}$). To implement the GCOB strategy we define quantities in a way analogous to the way the corresponding quantities were defined in \cite{Stojnicl1RegPosfinn}. For the basis that we will work in we choose the columns of the following matrix
\begin{equation}
B^{(l,l_1,bin)}_{ext,1}=\begin{bmatrix}
-e_{1} & -e_{2} & \dots & -e_{l_1} & e_{l+l_1+2} &  e_{l+l_1+3} & \dots & e_{n} & \begin{bmatrix}
                                       \0_{l_1\times 1}\\
                                       -\1_{(l+1)\times 1} \\
                                       \0_{(n-l-1-l_1)\times 1}
                                     \end{bmatrix}\frac{1}{\sqrt{l+1}}
          \end{bmatrix}.\label{eq:posext1anal2}
\end{equation}
The subspace spanned by the columns of $B^{(l,l_1,bin)}_{ext,1}$ clearly contains $-phull^{(l,l_1,bin)}_{ext,1}$. To find a convenient parametrization
of $pos(-e_{1},-e_{2},\dots,-e_{l_1},e_{l+l_1+2},e_{l+l_1+3},\dots,e_{n},-\1_{n\times 1})$ we look at vectors $\g$ (clearly, this time $\g\in \mR^{n-l}$) such that
\begin{equation}
G^{(l,l_1,bin)}_{ext,1} =\{\g\in \mR^{n-l}| B^{(l,l_1,bin)}_{ext,1}\g\in pos(-e_{1},-e_{2},\dots,-e_{l_1},e_{l+l_1+2},e_{l+l_1+3},\dots,e_{n},-\1_{n\times 1})\triangleq-phull^{(l,l_1,bin)}_{ext,1} \}.\label{eq:posext1anal3}
\end{equation}
Setting
\begin{equation}
D^{(l,l_1,bin)}_{ext,1}=\begin{bmatrix}
-e_{1} & -e_{2} & \dots & -e_{l_1} & e_{l+l_1+2} &  e_{l+l_1+3} & \dots & e_{n} & -\1_{n\times 1}\end{bmatrix},\label{eq:posext1anal4}
\end{equation}
we can write
\begin{multline}
pos(-e_{1},-e_{2},\dots,-e_{l_1},e_{l+l_1+2},e_{l+l_1+3},\dots,e_{n},-\1_{n\times 1})  =  \{D^{(l,l_1,bin)}_{ext,1}\g^{(D)}|\g^{(D)}\geq 0,\g^{(D)}\in \mR^{n-l}\} \\
 =  \{\begin{bmatrix}
-e_{1} & -e_{2} & \dots & -e_{l_1} & e_{l+l_1+2} &  e_{l+l_1+3} & \dots & e_{n} & -\1_{n\times 1}\end{bmatrix}\g^{(D)}|\g^{(D)}\geq 0,\g^{(D)}\in \mR^{n-l}\}.\label{eq:posext1anal5}
\end{multline}
Combining (\ref{eq:posext1anal3}) and (\ref{eq:posext1anal5}) we obtain
\begin{equation}
G^{(l,l_1,bin)}_{ext,1} =\{\g\in \mR^{n-l}| \exists \g^{(D)}\in\mR^{n-l}, \g^{(D)}\geq 0, \quad \mbox{and} \quad B^{(l,l_1,bin)}_{ext,1}\g= D^{(l,l_1,bin)}_{ext,1}\g^{(D)} \}.\label{eq:posext1anal6}
\end{equation}
Similarly to what was the case in \cite{Stojnicl1RegPosfinn}, equations $(l_1+j),j=1,2,\dots,(l+1)$, in $B^{(l,+)}_{ext,1}\g= D_{ext,1}\g^{(D)}$ imply that
\begin{equation}
\g_{n-l}\frac{1}{\sqrt{l+1}}=\g^{(D)}_{n-l}\geq 0.\label{eq:posext1anal7}
\end{equation}
On the other hand, first $l_1$ and last $n-l-1-l_1$ equations in $B^{(l,+)}_{ext,1}\g= D_{ext,1}\g^{(D)}$ imply different conclusions when compared to \cite{Stojnicl1RegPosfinn} and what ultimately relates to the positive $\ell_1$. Namely, for $j\in\{1,2,\dots,l_1\}$, $j$-th equations in $B^{(l,l_1,bin)}_{ext,1}\g= D^{(l,l_1,bin)}_{ext,1}\g^{(D)}$ combined with (\ref{eq:posext1anal7}) imply that
\begin{eqnarray}
& & -\g_{j}=-\g^{(D)}_{j}-\g^{(D)}_{n-l}=-\g^{(D)}_{j}-\g_{n-l}\frac{1}{\sqrt{l+1}}\nonumber \\
\Longleftrightarrow & & -\g_{j}+\g_{n-l}\frac{1}{\sqrt{l+1}}=-\g^{(D)}_{j}\leq 0 \nonumber \\
\Longleftrightarrow & & \g_{j}\geq \g_{n-l}\frac{1}{\sqrt{l+1}}.
\label{eq:posext1anal8}
\end{eqnarray}
For $j\in\{l_1+1,l_1+2,\dots,n-l-1\}$, $l+j+1$-th equations in $B^{(l,l_1,bin)}_{ext,1}\g= D^{(l,l_1,bin)}_{ext,1}\g^{(D)}$ combined with (\ref{eq:posext1anal7}) imply that
\begin{eqnarray}
& & \g_{j}=\g^{(D)}_{j}-\g^{(D)}_{n-l}=\g^{(D)}_{j}-\g_{n-l}\frac{1}{\sqrt{l+1}}\nonumber \\
\Longleftrightarrow & & \g_{j}+\g_{n-l}\frac{1}{\sqrt{l+1}}=\g^{(D)}_{j}\geq 0 \nonumber \\
\Longleftrightarrow & & \g_{j}\geq -\g_{n-l}\frac{1}{\sqrt{l+1}}.
\label{eq:posext1anal8a}
\end{eqnarray}
A combination of (\ref{eq:posext1anal6}), (\ref{eq:posext1anal7}), (\ref{eq:posext1anal8}), and (\ref{eq:posext1anal8a}) then gives
\begin{equation}
G^{(l,l_1,bin)}_{ext,1} =\{\g\in \mR^{n-l}| \g_{n-l}\geq 0, \g_{j}\geq \frac{\g_{n-l}}{\sqrt{l+1}}, j\in\{1,2,\dots,l_1\},
\g_{j}\geq -\frac{\g_{n-l}}{\sqrt{l+1}}, j\in\{l_1+1,l_1+2,\dots,n\} \}.\label{eq:posext1anal9}
\end{equation}
Now that we have the above parametrization of the content of $-phull^{(l,l_1,bin)}_{ext,1}$ what is left to do to complete the GCOB is to work with independent standard normal, i.e. Gaussian, coordinates (as the GCOB states, due to the rotational invariance of the standard normal distribution working with them then automatically gives the fraction of the subspace taken by the set they define). To that end we finally have for $\phiext(F^{(l,l_1,bin)}_1,C^{(bin)}_w)$
\begin{eqnarray}
\phiext(F^{(l,l_1,bin)}_1,C^{(bin)}_w) &  = & \frac{1}{(2\pi)^{\frac{n-l}{2}}}\int_{\g\in G^{(l,l_1,bin)}_{ext,1}}e^{-\frac{\g^T\g}{2}}d\g \nonumber \\
& = & \frac{1}{(2\pi)^{\frac{l}{2}}}\int_{\g_{n-l}\geq 0}e^{-\frac{\g_{n-l}^2}{2}}
\lp \prod_{j=1}^{l_1}\frac{1}{(2\pi)^{\frac{l}{2}}}\int_{\g_{j}\geq \g_{n-l}\frac{1}{\sqrt{l+1}}}e^{-\frac{\g_{j}^2}{2}}d\g_{j}\rp\nonumber \\
& & \times
\lp \prod_{j=l_1+1}^{n-l-1}\frac{1}{(2\pi)^{\frac{l}{2}}}\int_{\g_{j}\geq -\g_{n-l}\frac{1}{\sqrt{l+1}}}e^{-\frac{\g_{j}^2}{2}}d\g_{j}\rp d\g_{n-l}\nonumber \\
& = & \frac{1}{(2\pi)^{\frac{l}{2}}}\int_{\g_{n-l}\geq 0}e^{-\frac{\g_{n-l}^2}{2}}
\lp \frac{1}{2}\erfc\lp\frac{\g_{n-l}}{\sqrt{2}\sqrt{l+1}}\rp\rp^{l_1}\nonumber \\
& & \times
\lp \frac{1}{2}\erfc\lp\frac{-\g_{n-l}}{\sqrt{2}\sqrt{l+1}}\rp\rp^{n-l-1-l_1} d\g_{n-l}.
\label{eq:posext1anal10}
\end{eqnarray}

\subsubsection{Computing $\phiext(F^{(l,l_2,bin)}_2,C^{(bin)}_w)$}
\label{sec:posext2ang}

Computing $\phiext(F^{(l,l_2,bin)}_2,C^{(bin)}_w)$ requires much less effort than the above computation of $\phiext(F^{(l,l_1,bin)}_1,C^{(bin)}_w)$. The positive hull of the outward normals for face $F^{(l,l_2,bin)}_2$ is given by
\begin{equation}\label{eq:posext2anal1}
phull^{(l,l_2,bin)}_{ext,2}\triangleq  -pos(-e_{1},-e_{2},\dots,-e_{l_2},e_{l+l_2+1},e_{l+l_2+2},\dots,e_{n}).
\end{equation}
One then automatically has
\begin{equation}
\phiext(F^{(l,l_2,bin)}_2,C^{(bin)}_w)  =  \frac{1}{2^{n-l}}.
\label{eq:posext2anal2}
\end{equation}
All of what we presented above is then enough to determine $p^{(bin)}_{err}(k,m,n)$. We summarize the obtained results in the following theorem.
\begin{theorem}(Exact \emph{binary} $\ell_1$'s performance characterization -- finite dimensions)
Let $A$ be an $m\times n$ matrix in (\ref{eq:system})
with i.i.d. standard normal components (or, alternatively, with the null-space uniformly distributed in the Grassmanian). Let
the unknown $\x$ in (\ref{eq:system}) be binary $k$-sparse. Let $p^{(bin)}_{err}(k,m,n)$ be the probability that the solutions of (\ref{eq:binl0}) and (\ref{eq:l1}) do \emph{not} coincide. Then
\begin{eqnarray}
p^{(bin)}_{err}(k,m,n) & = & 2 ( \sum_{l=m+2j+1,j\in \mN_0}^{n-1} \sum_{l_1=l^{(min)}_1}^{l^{(max)}_1} c^{(l,l_1,bin)}_1\phiint(0,F^{(l,l_1,bin)}_1)\phiext(F^{(l,l_1,bin)}_1,C^{(bin)}_w)\nonumber \\
& & + \sum_{l=m+2j+1,j\in \mN_0}^{n} \sum_{l_2=l^{(min)}_2}^{l^{(max)}_2} c^{(l,l_2,bin)}_2 \phiint(0,F^{(l,l_2,bin)}_2)\phiext(F^{(l,l_2,bin)}_2,C^{(bin)}_w)),
\label{eq:posfinalthm1}
\end{eqnarray}
where $l^{(min)}_1$, $l^{(max)}_1$, $l^{(min)}_2$, and $l^{(max)}_2$ are as given in (\ref{eq:posintanal1}), and $c^{(l,l_1,bin)}_1$, $ c^{(l,l_2,bin)}_2$, $\phiint(0,F^{(l,l_1,bin)}_1)$, $\phiint(0,F^{(l,l_2,bin)}_2)$, $\phiext(F^{(l,l_1,bin)}_1,C^{(bin)}_w)$, and $\phiext(F^{(l,l_2,bin)}_2,C^{(bin)}_w)$ are as given in (\ref{eq:posintanal4}), (\ref{eq:posintanal5}), (\ref{eq:posint1anal11}), (\ref{eq:posint2anal12}), (\ref{eq:posext1anal10}), and  (\ref{eq:posext2anal2}), respectively.
\label{thm:posfinalperr}
\end{theorem}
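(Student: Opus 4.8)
The plan is to assemble Theorem~\ref{thm:posfinalperr} entirely from pieces that have already been put in place earlier in this section, so the ``proof'' is really a bookkeeping argument that verifies nothing is double-counted or omitted. First I would recall the starting point: by Theorem~\ref{thm:posthmregposcond}, the solutions of (\ref{eq:binl0}) and (\ref{eq:l1}) fail to coincide exactly when the polyhedral cone $C^{(bin)}_w$ in (\ref{eq:posdefSw}) intersects the null-space of $A$ nontrivially. For $A$ with i.i.d.\ standard normal entries (equivalently, a uniformly random null-space of dimension $n-m$ in the Grassmannian), the probability of such an intersection is given by the classical conic/Grassmannian intersection formula in terms of internal and external angles of the faces of $C^{(bin)}_w$; this is the content of (\ref{eq:posanal3}), taken over from \cite{Stojnicl1RegPosfinn}. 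So the first step is simply to invoke (\ref{eq:posanal3}) as the master identity for $p^{(bin)}_{err}(k,m,n)$.

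The second step is the combinatorial decomposition of the face set. I would point out that the faces of $C^{(bin)}_w$ come in two genuinely different flavors: those lying on the boundary hyperplane $-\sum_{i=1}^k\w_i=\sum_{i=k+1}^n\w_i$ (the ``$\calF^{(l,bin)}_1$'' faces, which lose one extra degree of freedom to that equality) and those that do not touch it (the ``$\calF^{(l,bin)}_2$'' faces). Each flavor is further stratified by how many of the coordinates in $I_l=\{1,\dots,k\}$ are set to zero — the index $l_1$ (resp.\ $l_2$) — with the admissible ranges $l_1^{(min)},l_1^{(max)},l_2^{(min)},l_2^{(max)}$ in (\ref{eq:posintanal1}) dictated by the constraint that the remaining nonzero coordinates split appropriately between $I_l$ and $I_r$. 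The number of faces in each stratum is the product of two binomial coefficients, namely $c^{(l,l_1,bin)}_1$ and $c^{(l,l_2,bin)}_2$ in (\ref{eq:posintanal4}) and (\ref{eq:posintanal5}); and by the obvious permutation symmetry of $C^{(bin)}_w$ within $I_l$ and within $I_r$, all faces in a given stratum have equal internal and external angles, so each stratum contributes its cardinality times the angle product of the representative faces (\ref{eq:posintanal7}) and (\ref{eq:posintanal8}). Substituting this into (\ref{eq:posanal3}) yields precisely (\ref{eq:posintanal6}), which is the same as (\ref{eq:posfinalthm1}); note the sum over $l_1$-type faces runs only to $l=n-1$ (a full-dimensional face would force the equality to be an active constraint on an $n$-dimensional set, impossible), while the $l_2$-type sum runs to $l=n$.

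The third step is just to certify that each of the six ingredient quantities appearing in (\ref{eq:posfinalthm1}) is exactly the one computed in Sections~\ref{sec:posintang}--\ref{sec:posextang}: the cardinalities from (\ref{eq:posintanal4})--(\ref{eq:posintanal5}); the internal angles $\phiint(0,F^{(l,l_1,bin)}_1)$ from the GCOB computation culminating in (\ref{eq:posint1anal11}) and $\phiint(0,F^{(l,l_2,bin)}_2)$ from (\ref{eq:posint2anal12}); and the external angles $\phiext(F^{(l,l_1,bin)}_1,C^{(bin)}_w)$ from (\ref{eq:posext1anal10}) and $\phiext(F^{(l,l_2,bin)}_2,C^{(bin)}_w)=2^{-(n-l)}$ from (\ref{eq:posext2anal2}). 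Since each of those computations was carried out in full earlier, the theorem follows by direct substitution.

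The parts that genuinely need care — and hence the main obstacles — are the angle computations already dispatched above, not the final assembly; in particular, the internal-angle computation for the $\calF^{(l,l_1,bin)}_1$ faces, where one has to recognize that reordering the rows of the orthonormal basis matrix $B^{(l,l_1,bin)}_{int,1}$ in (\ref{eq:posint1anal1}) reduces it to the matrix already handled in \cite{Stojnicl1RegPosfinn}, and the subsequent characteristic-function inversion in (\ref{eq:posint1anal9})--(\ref{eq:posint1anal11}) producing the $\erfi$-kernel integrals. For the theorem statement itself, the only residual subtlety worth a sentence is confirming that the two families $\calF^{(l,bin)}_1$ and $\calF^{(l,bin)}_2$ together exhaust $\calF^{(l,bin)}$ with no overlap — a face either saturates the defining inequality of $C^{(bin)}_w$ or it does not — so that splitting the sum in (\ref{eq:posanal3}) into the two sums of (\ref{eq:posfinalthm1}) is exact. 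Once that is noted, the proof is complete.

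\qd
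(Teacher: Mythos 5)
Your proposal is correct and follows essentially the same route as the paper: invoke the master angle formula (\ref{eq:posanal3}) from \cite{Stojnicl1RegPosfinn}, split the faces into the equality-saturating family $\calF^{(l,bin)}_1$ and the non-saturating family $\calF^{(l,bin)}_2$ stratified by $l_1$ (resp.\ $l_2$), use the permutation symmetry within $I_l$ and $I_r$ to reduce each stratum to its cardinality times the angles of a representative face, and substitute the GCOB internal-angle and external-angle computations of Sections~\ref{sec:posintang} and~\ref{sec:posextang}. Your added remarks on the exhaustiveness/disjointness of the two face families and on the differing upper limits $n-1$ versus $n$ are consistent with (\ref{eq:posintanal1a})--(\ref{eq:posintanal2a}) and only make explicit what the paper leaves implicit.
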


\subsection{Simulations and theoretical results -- binary $\ell_1$}
\label{sec:posthnumresults}

Theorem \ref{thm:posfinalperr} provides a powerful tool to determine $p^{(bin)}_{err}(k,m,n)$. Here we present the concrete values that we obtained for $p^{(bin)}_{err}(k,m,n)$ utilizing Theorem \ref{thm:posfinalperr}. Namely, in Figure \ref{fig:l1regnonperr} we show first separately on the left graphed $p^{(bin)}_{err}(k,m,n)$ values. These were obtained by fixing $k=5$ and $n=30$ and varying $m$ within the so-called transition zone so that $p^{(bin)}_{err}(k,m,n)$ changes from one to zero. In the same figure on the right we also show the values that we obtained through numerical simulations for the same sets of system's dimensions $(k,m,n)$. We complement Figure \ref{fig:l1regnonperr} by adding Table \ref{tab:l1regnonperrtab1} where we show for several triplets $(k,m,n)$ the numerical values for $p^{(bin)}_{err}(k,m,n)$ that were graphed in Figure \ref{fig:l1regnonperr}. On top of that, Table \ref{tab:l1regnonperrtab1} also contains for these same $(k,m,n)$ triplets the number of numerical experiments that were run as well as the number of them that did not result in having the solution of (\ref{eq:l1}) match the binary solution of (\ref{eq:binl0}). It is not that hard to see from both, Figure \ref{fig:l1regnonperr} and Table \ref{tab:l1regnonperrtab1}, that there is an overwhelming agreement between the simulated and the theoretical results.

\begin{figure}[htb]
\begin{minipage}[b]{.5\linewidth}
\centering
\centerline{\epsfig{figure=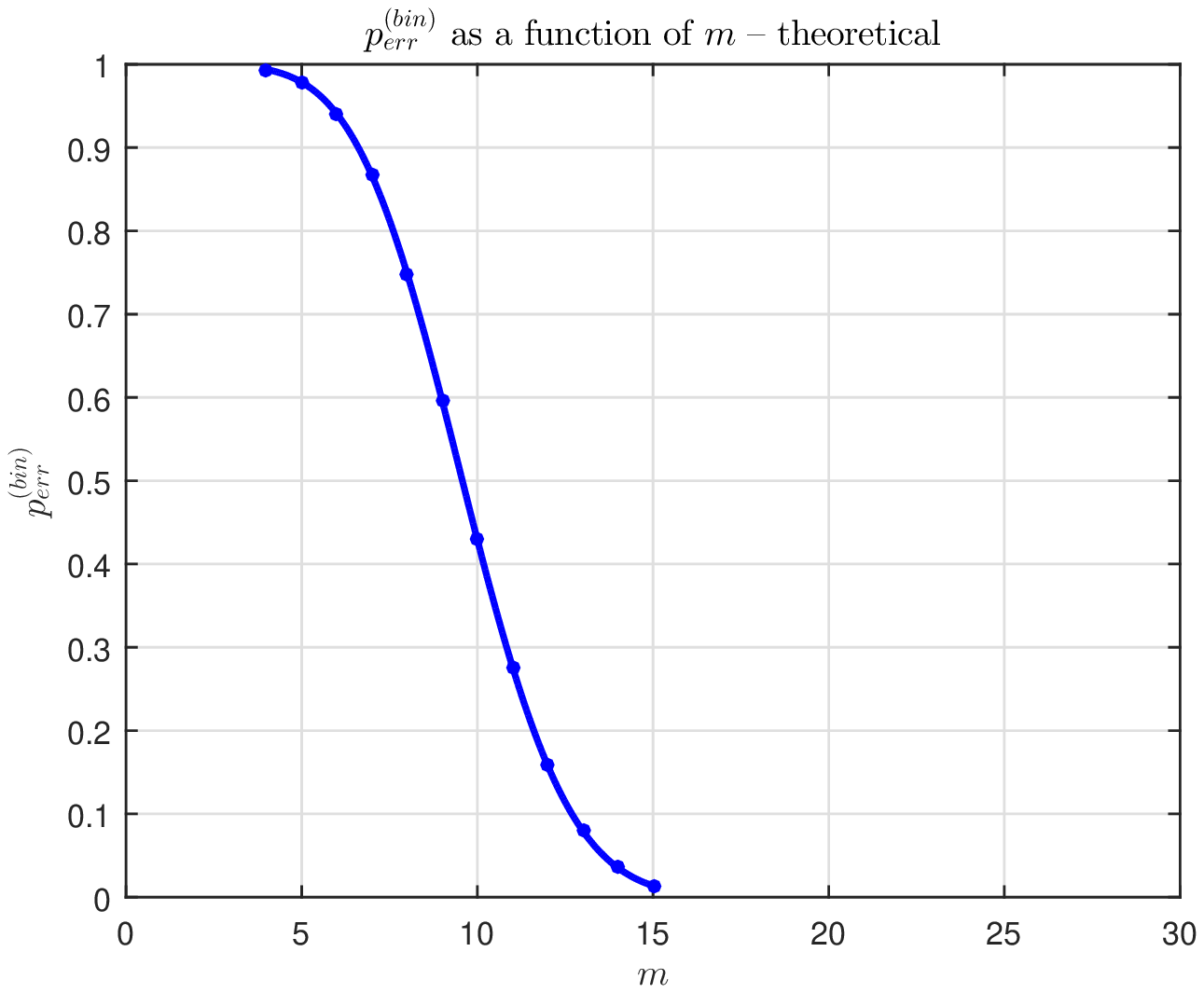,width=9cm,height=7cm}}
\end{minipage}
\begin{minipage}[b]{.5\linewidth}
\centering
\centerline{\epsfig{figure=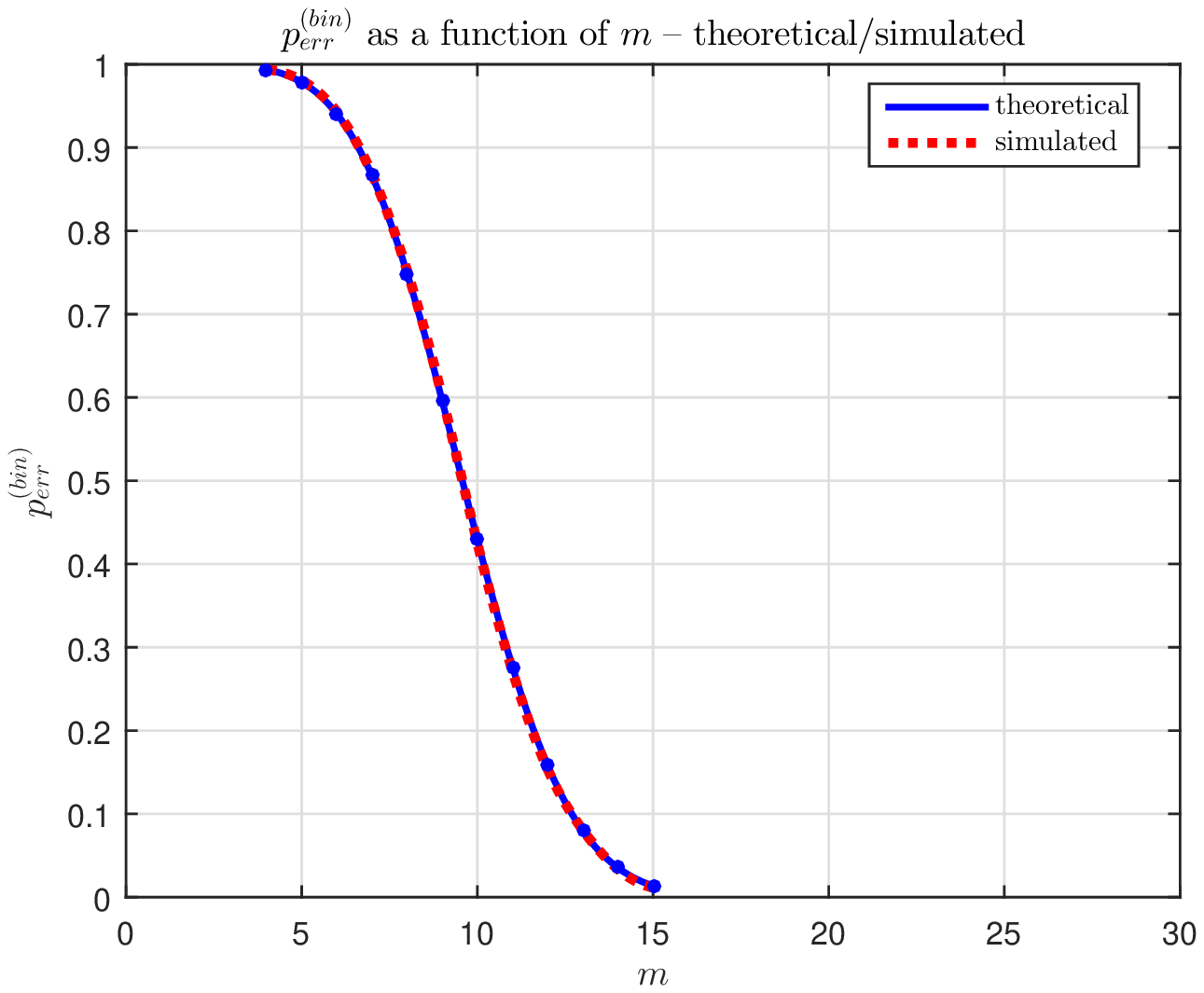,width=9cm,height=7cm}}
\end{minipage}
\caption{$p^{(bin)}_{err}(k,m,n)$ as a function of $m$ ($k=5$, $n=30$); left -- theory; right -- simulations}
\label{fig:l1regnonperr}
\end{figure}

\begin{table}[h]
\caption{Simulated and theoretical results for $p^{(bin)}_{err}(k,m,n)$; $k=5$, $n=30$}\vspace{.1in}
\hspace{-0in}\centering
\begin{tabular}{||c||c|c|c|c|c|c||}\hline\hline
$m$                            & $ 7 $ & $ 8 $ & $ 9 $ & $ 10 $ & $ 11 $ & $ 12 $ \\ \hline \hline
$\#$ of failures               & $ 5255 $ & $ 4515 $ & $ 3198 $ & $ 2265 $ & $ 2328 $ & $ 1303 $ \\ \hline \hline
$\#$ of repetitions            & $ 6051 $ & $ 6002 $ & $ 5321 $ & $ 5326 $ & $ 8573 $ & $ 8476 $ \\ \hline \hline
$p^{(bin)}_{err}(k,m,n)$ -- simulation& $ \bl{\mathbf{0.8685}} $ & $ \bl{\mathbf{0.7522}} $ & $ \bl{\mathbf{0.6010}} $ & $ \bl{\mathbf{0.4253}} $ & $ \bl{\mathbf{0.2716}} $ & $ \bl{\mathbf{0.1537}} $ \\ \hline \hline
$p^{(bin)}_{err}(k,m,n)$ -- theory    & $ \mathbf{0.8663} $ & $ \mathbf{0.7489} $ & $ \mathbf{0.5958} $ & $ \mathbf{0.4292} $ & $ \mathbf{0.2766} $ & $ \mathbf{0.1582} $ \\ \hline \hline
\end{tabular}
\label{tab:l1regnonperrtab1}
\end{table}

\subsection{Asymptotics}
\label{sec:posasym}

As mentioned earlier on several occasions, the results that we presented in previous sections can be thought of as finite dimensional complements to the asymptotic ones presented in companion paper \cite{Stojnicl1BnBxasymldp}. Although the main interest in this paper is study of finite dimensional behavior, in this section we will sketch how the results derived here behave as the dimensions grow large (of course, a full detailed analysis of the asymptotic regime we leave for \cite{Stojnicl1BnBxasymldp}; here we just present what will eventually be some of the starting points in \cite{Stojnicl1BnBxasymldp}.

As is by now well known, increasing systems' dimensions (say assuming their linear proportionality) the so-called phase-transition phenomenon emerges (this was observed and characterized through \cite{DonohoPol,DonohoUnsigned,StojnicCSetam09,StojnicUpper10}) for the standard $\ell_1$ and in \cite{StojnicISIT2010binary} for the binary $\ell_1$). In other words, for any given $\beta$ the transition zone (where $p^{(bin)}_{err}(k,m,n)$ changes from one to zero) basically shrinks to a single point, say $\alpha^{(bin)}_w$, such that for $\alpha>\alpha^{(bin)}_w$ $p^{(bin)}_{err}(k,m,n)$ goes to zero at an exponential rate and for $\alpha<\alpha^{(bin)}_w$ $p^{(bin)}_{cor}(k,m,n)=1-p^{(bin)}_{err}(k,m,n)$ goes to zero at an exponential rate. Our companion paper \cite{Stojnicl1BnBxasymldp} manages to settle the so-called LDP phenomenon which basically assumes a precise determination of the rates of decay of these probabilities. As is obvious from \cite{Stojnicl1BnBxasymldp}, the LDP analysis is not super trivial at all and goes way beyond what we discuss here. However, some of the starting points of the analyses presented in \cite{Stojnicl1BnBxasymldp} can be connected to some of the results proven in earlier sections in this paper and below we show that connection.

Since we will be switching to the asymptotic regime, until the end of this subsection we will assume the above mentioned so-called linear regime, i.e. we will assume that as $n$ is getting large, $k=\beta n$ and $m=\alpha n$, where $\beta$ and $\alpha$ are fixed constants independent of $n$. Also, we find it convenient to set $\rho\triangleq \lim_{n\rightarrow\infty}\frac{l}{n}$, $\rho_1\triangleq \lim_{n\rightarrow\infty}\frac{l_1}{n}$, and $\rho_2\triangleq \lim_{n\rightarrow\infty}\frac{l_2}{n}$. Also, let $S_{\rho_1}=S_{\rho_2}=(\max(0,\beta-\rho),\min(\beta,1-\rho))$ (these definitions are for the completeness; it is rather clear though that only $\rho\geq \beta$ is of interest here). Then, as $n\rightarrow\infty$, from (\ref{eq:posfinalthm1}) we have
\begin{equation}
\lim_{n\rightarrow \infty}\frac{\log(p^{(bin)}_{err}(k,m,n))}{n}  =  \max\{\max_{\rho\geq \alpha,\rho_1\in S_{\rho_1}}\lim_{n\rightarrow \infty} \frac{\log(\zeta^{(\infty,bin)}_1)}{n},\max_{\rho\geq \alpha,\rho_2\in S_{\rho_2}}\lim_{n\rightarrow \infty}\frac{\log(\zeta^{(\infty,bin)}_2)}{n}\},
\label{eq:posasym1}
\end{equation}
where
\begin{eqnarray} \label{eq:posasym2}
  \lim_{n\rightarrow \infty} \frac{\log(\zeta^{(\infty,bin)}_1)}{n} &=&
  \lim_{n\rightarrow \infty} \lp\frac{\log(c^{(l,l_1,bin)}_{1})}{n}+
  \frac{\log(\phiint(0,F^{(l,l_1,bin)}_1))}{n}+\frac{\log(\phiext(F^{(l,l_1,bin)}_1,C^{(bin)}_w))}{n}\rp \nonumber \\
  \lim_{n\rightarrow \infty} \frac{\log(\zeta^{(\infty,bin)}_2)}{n} &=&
  \lim_{n\rightarrow \infty} \lp\frac{\log(c^{(l,l_2,bin)}_{2})}{n}+
  \frac{\log(\phiint(0,F^{(l,l_2,bin)}_2))}{n}+\frac{\log(\phiext(F^{(l,l_2,bin)}_2,C^{(bin)}_w))}{n}\rp.\nonumber \\
\end{eqnarray}
From (\ref{eq:posintanal4}) and (\ref{eq:posintanal5}) we have
\begin{eqnarray}
\lim_{n\rightarrow \infty} \frac{\log(c^{(l,l_1,bin)}_{1})}{n}=\lim_{n\rightarrow \infty}\frac{\log\binom{k}{l_1}\binom{n-k}{n-l-1-l_1}}{n}=-\beta H\lp\frac{\rho_1}{\beta}\rp
-(1-\beta)H\lp\frac{1-\rho-\rho_1}{1-\beta}\rp,\label{eq:posasym3a}
\end{eqnarray}
and
\begin{eqnarray}
\lim_{n\rightarrow \infty} \frac{\log(c^{(l,l_2,bin)}_{2})}{n}=\lim_{n\rightarrow \infty}\frac{\log\binom{k}{l_2}\binom{n-k}{n-l-l_2}}{n}=-\beta H\lp\frac{\rho_2}{\beta}\rp-(1-\beta)H\lp\frac{1-\rho-\rho_2}{1-\beta}\rp,\label{eq:posasym3b}
\end{eqnarray}
where
\begin{equation}\label{eq:posasym4}
  H(x)=x\log(x)+(1-x)\log(1-x).
\end{equation}
Focusing on $l_2=l_1$ and following \cite{Stojnicl1RegPosfinn}, from (\ref{eq:posint1anal8}) and (\ref{eq:posint2anal8}) we have
\begin{eqnarray}
\lim_{n\rightarrow \infty} \frac{\log(\phiint(0,F^{(l,l_1,bin)}_1)}{n} & \leq & \lim_{n\rightarrow \infty} \frac{\log(\phiint(0,F^{(l,l_2,bin)}_2)}{n}.\label{eq:posasym4a}
\end{eqnarray}
Let
\begin{equation}\label{eq:posasym4b}
  c_F(l)=\sqrt{\frac{\sqrt{l+1}-1}{l\sqrt{l+1}}},
\end{equation}
and
\begin{equation}\label{eq:posasym4c}
B_F^{(l,l_1,bin)}=\begin{bmatrix}
  \0_{l_1\times l_1}  &   \0_{l_1\times (l+1)} & \0_{l_1\times (n-l-1-l_1)}\\
\0_{(l+1) \times l_1}  &
  \begin{bmatrix}
      I_{l\times l}-c_F(l)^2\1_{l\times 1}\1_{1\times l} & -\1_{l\times 1}\frac{1}{\sqrt{l+1}} & \\
        -\1_{1\times l}\frac{1}{\sqrt{l+1}} & -\frac{1}{\sqrt{l+1}}
        \end{bmatrix} & \0_{(l+1)\times (n-l-1-l_1)} \\
 \0_{(n-l-1-l_1) \times l_1}  &        \0_{(n-l-1-l_1)\times (l+1)} & \0_{(n-l-1-l_1)\times (n-l-1-l_1)}
      \end{bmatrix}.
\end{equation}
Clearly, $(B_F^{(l,l_1,bin)})^TB_F^{(l,l_1,bin)}=I_{n\times n}$ which implies
\begin{equation}\label{eq:posasym4ca}
\phiint(0,F^{(l,l_2,bin)}_2)=\phiint(0,B_F^{(l,l_1,bin)} F^{(l,l_2,bin)}_2).
\end{equation}
Recall that
\begin{eqnarray}\label{eq:posasym4d}
F^{(l,l_1,bin)}_1 & = & \{\w\in \mR^n|-\sum_{i=l_1+1}^k \w_i = \sum_{i=k+1}^{l+l_1+1}\w_{i},\w_{l_1+1:k}\leq 0,\w_{1:l_1}= 0,
\w_{k+1:l+l_1+1}\geq 0,\w_{l+l_1+2:n}= 0\} \nonumber \\
F^{(l,l_2,bin)}_2 & = & \{\w\in \mR^n|-\sum_{i=1}^k \w_i\geq \sum_{i=k+1}^{l}\w_{i},,\w_{l_2+1:k}\leq 0,\w_{1:l_2}= 0,
\w_{k+1:l+l_2}\geq 0,\w_{l+l_2+1:n}= 0\}.
\end{eqnarray}
One can now repeat line by line the arguments from \cite{Stojnicl1BnBxasymldp} to conclude
\begin{equation}\label{eq:posasym4h}
\phiint(0,F^{(l,l_2,bin)}_2)=\phiint(0,F^{(l,l_1,bin)}_2)=\phiint(0,B_F^{(l,l_1,bin)} F^{(l,l_1,bin)}_2) \leq \phiint(0,F^{(l,l_1,bin)}_1).
\end{equation}
Finally, combining (\ref{eq:posasym4a}) and (\ref{eq:posasym4h}) we obtain
\begin{eqnarray}
\lim_{n\rightarrow \infty} \frac{\log(\phiint(0,F^{(l,l_1,bin)}_1)}{n}  =  \lim_{n\rightarrow \infty} \frac{\log(\phiint(0,F^{(l,l_2,bin)}_2)}{n}.
\label{eq:posasym4i}
\end{eqnarray}
From (\ref{eq:posint2anal8}) we also have
\begin{eqnarray}
\phiint(0,F^{(l,l_1,bin)}_2) &  = &   \frac{2^l}{2^l(2\pi)^{\frac{l}{2}}}\int_{-\1_{1\times l}\w_{l_1+1:l+l_1}\geq 0,\w_{l_1+1:k}\leq 0,\w_{k+1:l+l_1}\geq 0}e^{-\frac{\w_{l_1+1:l+l_1}^T\w_{l_1+1:l+l_1}}{2}}d\w_{l_1+1:l+l_1}\nonumber \\
&  = & \frac{1}{2^{l}}P(-\1_{1\times l}\w_{l_1+1:l+l_1}\geq 0),\label{eq:posasym6}
\end{eqnarray}
where on the right side of the last equality one can think of the elements of $\w_{l_1+1:k}$ as being the i.i.d. negative standard half normals and the elements of $\w_{k+1:l+l_1}$ as being the i.i.d. standard half normals. By the definition of the large deviations principle we further have
\begin{eqnarray}
\lim_{n\rightarrow \infty} \frac{\log(\phiint(0,F^{(l,l_1,bin)}_2))}{n}
&  = & \lim_{n\rightarrow \infty} \frac{\log(\frac{1}{2^{l}}P(-\1_{1\times l}\w_{l_1+1:l+l_1}\geq 0))}{n}\nonumber \\
& = &  \min_{\mu_y\geq 0} \lim_{n\rightarrow \infty} \frac{\log(\mE e^{-\mu_y\1_{1\times l}\w_{l_+1:l+l_1}})}{n}-\rho\log(2)\nonumber \\
& = &  \min_{\mu_y\geq 0} \lp(\rho+\rho_1-\beta) \log\lp\mE e^{-\mu_y\w_{k+1}}\rp+(\beta-\rho_1)\log\lp\mE e^{\mu_y\w_{l_1+1}}\rp\rp-\rho\log(2) \nonumber \\
& = &  \min_{\mu_y\geq 0} ((\rho+\rho_1-\beta) \log\lp\frac{2}{\sqrt{2\pi}}\int_{0}^{\infty} e^{-\frac{\w_{k+1}^2}{2}-\mu_y\w_{k+1}}d\w_{k+1}\rp \nonumber \\
& & +(\beta-\rho_1) \log\lp\frac{2}{\sqrt{2\pi}}\int_{0}^{\infty} e^{-\frac{\w_{l_1+1}^2}{2}+\mu_y\w_{l_1+1}}d\w_{l_1+1}\rp)-\rho\log(2) \nonumber \\
& = &  \min_{\mu_y\geq 0} \lp(\rho+\rho_1-\beta) \log\lp\erfc\lp\frac{\mu_y}{\sqrt{2}}\rp\rp+(\beta-\rho_1)\log\lp\erfc\lp\frac{-
\mu_y}{\sqrt{2}}\rp\rp +\rho\frac{\mu_y^2}{2}\rp\nonumber \\
& & -\rho\log(2) \nonumber \\
& = &  \min_{\mu_y\geq 0} \lp(\rho+\rho_1-\beta) \log(\erfc(\mu_y))+(\beta-\rho_1)\log(\erfc(-\mu_y))+\rho\frac{\mu_y^2}{2}\rp-\rho\log(2).\nonumber \\\label{eq:posasym7}
\end{eqnarray}
From (\ref{eq:posext1anal10}) we obtain
\begin{multline}
\lim_{n\rightarrow \infty} \frac{\log(\phiext(F^{(l,l_1,bin)}_1,C^{(bin)}_w))}{n} \\
 = \lim_{n\rightarrow \infty} \frac{\log\lp\frac{1}{(2\pi)^{\frac{l}{2}}}\int_{\g_{n-l}\geq 0}e^{-\frac{\g_{n-l}^2}{2}}
\lp \frac{1}{2}\erfc\lp\frac{\g_{n-l}}{\sqrt{2}\sqrt{l+1}}\rp\rp^{l_1}
\lp \frac{1}{2}\erfc\lp\frac{-\g_{n-l}}{\sqrt{2}\sqrt{l+1}}\rp\rp^{n-l-1-l_1} d\g_{n-l}\rp}{n}  \\
 =  \max_{\g_{n-l}\geq 0} \lp -\frac{\g_{n-l}^2}{2}+(1-\rho-\rho_1)\log\lp \frac{1}{2}\erfc\lp\frac{-\g_{n-l}}{\sqrt{2}\sqrt{\rho}}\rp\rp
 +\rho_1\log\lp \frac{1}{2}\erfc\lp\frac{\g_{n-l}}{\sqrt{2}\sqrt{\rho}}\rp\rp \rp \\
 =  \max_{\g_{n-l}\geq 0} \lp -\rho \g_{n-l}^2+(1-\rho-\rho_1)\log\lp \frac{1}{2}\erfc(-\g_{n-l})\rp
 +\rho_1\log\lp \frac{1}{2}\erfc(\g_{n-l})\rp \rp,
\label{eq:posasym8}
\end{multline}
and from (\ref{eq:posext2anal2}) we have
\begin{eqnarray}
\lim_{n\rightarrow \infty} \frac{\log(\phiext(F^{(l,l_2,bin)}_2,C^{(bin)}_w))}{n}
 =  \lim_{n\rightarrow \infty} \frac{\log\lp\frac{1}{2^{n-l}}\rp}{n}=-(1-\rho)\log(2).
\label{eq:posasym9}
\end{eqnarray}
For $\g_{n-l}=0$ in (\ref{eq:posasym8}) we have $\lim_{n\rightarrow \infty} \frac{\log(\phiext(F^{(l,l_1,bin)}_1,C^{(bin)}_w))}{n}=-(1-\rho)\log(2)$ which implies that  \begin{eqnarray}
\lim_{n\rightarrow \infty} \frac{\log(\phiext(F^{(l,l_2,bin)}_2,C^{(bin)}_w))}{n}
 \leq  \lim_{n\rightarrow \infty} \frac{\log(\phiext(F^{(l,l_1,bin)}_1,C^{(bin)}_w))}{n}.
\label{eq:posasym10}
\end{eqnarray}
A combination of (\ref{eq:posasym1}), (\ref{eq:posasym2}), (\ref{eq:posasym3a}), (\ref{eq:posasym3b}), (\ref{eq:posasym4i}), and (\ref{eq:posasym10}) gives
\begin{eqnarray}
\lim_{n\rightarrow \infty}\frac{\log(p^{(bin)}_{err}(k,m,n))}{n} & = & \max\{\max_{\rho\geq \alpha,\rho_1\in S_{\rho_1}}\lim_{n\rightarrow \infty} \frac{\log(\zeta^{(\infty,bin)}_1)}{n},\max_{\rho\geq \alpha}\lim_{n\rightarrow \infty}\frac{\log(\zeta^{(\infty,bin)}_2)}{n}\}\nonumber \\
& = & \max_{\rho\geq \alpha,\rho_1\in S_{\rho_1}}\lim_{n\rightarrow \infty} \frac{\log(\zeta^{(\infty,bin)}_1)}{n}.
\label{eq:posasym11}
\end{eqnarray}
Relying on (\ref{eq:posasym2}), (\ref{eq:posasym3a}), (\ref{eq:posasym3b}), (\ref{eq:posasym7}), and (\ref{eq:posasym8}), one can rewrite (\ref{eq:posasym11}) in the following way
\begin{eqnarray}
\lim_{n\rightarrow \infty}\frac{\log(p^{(bin)}_{err}(k,m,n))}{n}
 & = & \max_{\rho\geq \alpha,\rho_1\in S_{\rho_1}}\lim_{n\rightarrow \infty} \frac{\log(\zeta^{(\infty,bin)}_1)}{n}\nonumber \\
 & = & \max_{\rho\geq \alpha,\rho_1\in S_{\rho_1}} (-\beta H\lp\frac{\rho_1}{\beta}\rp
-(1-\beta)H\lp\frac{1-\rho-\rho_1}{1-\beta}\rp \nonumber \\
 & & + \min_{\mu_y\geq 0} \lp(\rho+\rho_1-\beta) \log(\erfc(\mu_y))+(\beta-\rho_1)\log(\erfc(-\mu_y))+\rho\frac{\mu_y^2}{2}\rp-\rho\log(2)\nonumber \\.
& & + \max_{\g_{n-l}\geq 0} \lp -\rho \g_{n-l}^2+(1-\rho-\rho_1)\log\lp \frac{1}{2}\erfc(-\g_{n-l})\rp
 +\rho_1\log\lp \frac{1}{2}\erfc(\g_{n-l})\rp \rp.\nonumber \\
\label{eq:posasym12}
\end{eqnarray}
Now, for any given $\beta$, let $\alpha^{(bin)}_w$ be the $\alpha$ that ensures $\lim_{n\rightarrow \infty}\frac{\log(p^{(bin)}_{err}(k,m,n))}{n} =0$ (as \cite{Stojnicl1BnBxasymldp} proves, such an $\alpha$ always exists). This is exactly the so-called phase-transition value of $\alpha$. One can now also follow line by line the reasoning in \cite{Stojnicl1RegPosfinn} regarding the optimal $\rho$ to conclude that if if $\alpha\geq \alpha^{(bin)}_w$ it is equal to $\alpha$. This means that the outer optimization in (\ref{eq:posasym12}) can be removed. Moreover, if $\alpha\leq \alpha^{(bin)}_w$ the optimal $\rho$ will again trivially be equal to $\alpha$. The only difference is that now one looks at the following complementary version of (\ref{eq:posanal3})
\begin{equation}
1-2\sum_{l=m-2j-1,j\in \mN_0,l\geq k-1} \sum_{F^{(l,bin)}\in \calF^{(l,bin)}}\phiint(0,F^{(l,bin)})\phiext(F^{(l,bin)},C^{(bin)}_w)=1-p^{(bin)}_{cor},\label{eq:posasym13}
\end{equation}
where $p^{(bin)}_{cor}$ is the probability of being correct, i.e. the probability that the solution of (\ref{eq:l1}) is the binary sparse solution of (\ref{eq:binl0}) and its decay rate is given by (\ref{eq:posasym12}) with $\rho\leq \alpha$. \cite{Stojnicl1RegPosfinn} uses the same line of reasoning applied for the probability of error to draw the conclusions about the optimality of $\rho$ in the context of the probability of being correct. That line of reasoning applies here as well and we have that again the optimal $\rho$ is indeed equal to $\alpha$ and that the outer optimization in (\ref{eq:posasym12}) can be removed. (\ref{eq:posasym12}) is then sufficient to fully determine numerically PT and LDP curves of the binary $\ell_1$. \cite{Stojnicl1BnBxasymldp}, goes way beyond that and explicitly solves (\ref{eq:posasym12}).

\section{Box $\ell_1$}
\label{sec:l1}

In this section we discuss the use of the modified $\ell_1$ from (\ref{eq:l1}) in the recovery of the box-constrained sparse vectors, i.e. in solving (\ref{eq:boxl0}). To ensure that everything is properly defined we will for a moment revisit the definition of the box-constrained sparse vectors and make it a bit more specific. Namely, in Section \ref{sec:back}, we introduced box-constrained $k$-sparse vector $\x$ as a vector that has $k$ components strictly inside the interval $[0,1]$ and remaining $(n-k)$ components at the edges of the interval. That of course will continue to be valid here. However, now we will also add that there are precisely $\kmu=(1-\mu)(n-k)$ ($\mu\in[\frac{1}{2},1)$) components of $\x$ that are equal to one and precisely $(n-k-\kmu)$ components that are equal to zero. Moreover, without loss of generality we will assume that the vector $\x$ that is the solution of (\ref{eq:boxl0}) is such that its elements $\x_{\kmu+k+1},\x_{k+\kmu+2},\dots,\x_{n}$ are equal to zero, its elements $\x_{1},\x_{2},\dots,\x_{\kmu}$ are equal to one, and its elements $\x_{\kmu+1},\x_{\kmu+2},\dots,\x_{\kmu+k}$ are strictly inside the interval $[0,1]$. Also, we should add that if one would like to draw a parallel with the standard sparse vectors, in a way both zero and one components here play the role of the zero components of the standard sparse vectors (for that reason we may often refer to the components of a box-constrained vector that belong to $(0,1)$ as the nonzero components, although technically box-constrained vectors have components that are equal to one which are also not zeros). The following result is established in \cite{Stojnicl1BnBxasymldp}
and is basically a box-constrained adaptation of a result proven for the general $\ell_1$ in \cite{StojnicCSetam09,StojnicICASSP09} and the binary $\ell_1$ in \cite{StojnicISIT2010binary}.
\begin{theorem}(\cite{StojnicCSetam09,StojnicICASSP09,StojnicISIT2010binary} ``Nonzero" elements of box-constrained $\x$ have fixed location)
Assume that an $m\times n$ system matrix $A$ is given. Let $\x$
be a box-constrained $k$-sparse vector and let $\mu$ be a real number such that $\mu\in[\frac{1}{2},1]$ and $\kmu=(1-\mu)(n-k)$ is an integer. Also, let each element of $\x$ belong to $[0,1]$ interval and let $\x_{\kmu+k+1}=\x_{\kmu+k+2}=\dots=\x_{n}=0$ and $\x_{1}=\x_{2}=\dots=\x_{\kmu}=1$. Further, assume that $\y\triangleq A\x$ and that $\w$ is
an $n\times 1$ vector such that $\w_i\leq 0,1\leq i\leq \kmu$, and $\w_i\geq 0,\kmu+k+1\leq i\leq n$. If
\begin{equation}
(\forall \w\in \textbf{R}^n | A\w=0) \quad  -\sum_{i=\kmu+1}^{\kmu+k} \w_i<\sum_{i=1}^{\kmu}\w_{i}+\sum_{i=\kmu+k+1}^{n}\w_{i},
\end{equation}
then the solutions of (\ref{eq:boxl0}) and (\ref{eq:l1}) coincide. Moreover, if
\begin{equation}
(\exists \w\in \textbf{R}^n | A\w=0) \quad  -\sum_{i=\kmu+1}^{\kmu+k} \w_i\geq \sum_{i=1}^{\kmu}\w_{i}+\sum_{i=\kmu+k+1}^{n}\w_{i},
\label{eq:boxthmeqgen}
\end{equation}
then there will be a box-constrained $\x$ from the above set such that it is the solution of (\ref{eq:boxl0}) and is not the solution of (\ref{eq:l1}).
\label{thm:thmregposcond}
\end{theorem}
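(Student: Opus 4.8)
The plan is to use the fact that every feasible point $z$ of (\ref{eq:l1}) lies in $[0,1]^n$, hence is nonnegative, so $\|z\|_1=\1^Tz$; thus (\ref{eq:l1}) is the linear program $\min\{\1^Tz\,:\,Az=\y,\ z\in[0,1]^n\}$, and box-constrained recovery becomes a statement about optimality (and uniqueness) of $\x$ for this LP. Write $I_1=\{1,\dots,\kmu\}$, $I_2=\{\kmu+1,\dots,\kmu+k\}$, $I_3=\{\kmu+k+1,\dots,n\}$ for the ``one'', ``interior'' and ``zero'' blocks of $\x$, and set
\[
\mathcal{C}=\{\w\in\mR^n\,:\,A\w=0,\ \w_{I_1}\le 0,\ \w_{I_3}\ge 0\}.
\]
First I would record the decisive elementary facts. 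If $z=\x+\w$ is feasible for (\ref{eq:l1}) then necessarily $\w_i\le 0$ on $I_1$ (since $\x_i=1$), $\w_i\ge 0$ on $I_3$ (since $\x_i=0$), and $\w_i$ is free on $I_2$ (since $\x_i\in(0,1)$), so $\w\in\mathcal{C}$; conversely, because the $I_2$-entries of $\x$ lie \emph{strictly} inside $(0,1)$, for every $\w\in\mathcal{C}$ the vector $\x+t\w$ is feasible for all small enough $t>0$. Moreover, by nonnegativity $\|z\|_1-\|\x\|_1=\1^T(z-\x)=\1^T\w$ \emph{exactly}, and the condition $\1^T\w>0$ is precisely $-\sum_{i\in I_2}\w_i<\sum_{i\in I_1}\w_i+\sum_{i\in I_3}\w_i$, the inequality in the theorem.

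With this the two halves are short. For the first part, the hypothesis is exactly that $\1^T\w>0$ for every nonzero $\w\in\mathcal{C}$ (the sign restrictions on $\w$ in the statement are the defining inequalities of $\mathcal{C}$, and $\w=0$ is vacuous). Any feasible $z\neq\x$ gives $\w=z-\x\in\mathcal{C}\setminus\{0\}$, hence $\|z\|_1=\|\x\|_1+\1^T\w>\|\x\|_1$, so $\x$ is the \emph{unique} minimizer of (\ref{eq:l1}); since, under the standing uniqueness assumption on the system recalled in the introduction, $\x$ is also the solution of (\ref{eq:boxl0}), the two solutions coincide. For the converse, given $\w\in\mathcal{C}\setminus\{0\}$ with $\1^T\w\le 0$, I would fix a box-constrained $\x$ with the prescribed block structure whose $I_2$-entries are strictly interior (e.g.\ all equal to $\tfrac12$), and put $\y=A\x$; then $\x$ is a solution of (\ref{eq:boxl0}), while for small $t>0$ the feasible point $z=\x+t\w$ satisfies $\|z\|_1=\|\x\|_1+t\,\1^T\w\le\|\x\|_1$ with $z\neq\x$, so $\x$ is not the (unique) solution of (\ref{eq:l1}).

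The genuine obstacle is not the null-space geometry above, which is bookkeeping once the LP reformulation is in hand, but the two ``$\ell_0$-side'' points: (i) justifying that the $\x$ in question really is an optimal point of the combinatorial problem (\ref{eq:boxl0}) (and, for the first part, that it is \emph{the} solution, so that ``coincide'' is meaningful), which is where one must invoke the genericity/uniqueness hypotheses on $A$ rather than anything proved here; and (ii) the boundary case $\1^T\w=0$ in the converse, where the constructed $z\neq\x$ only attains the \emph{same} $\ell_1$ value, so ``is not the solution'' must be read as ``is not the unique solution''. A secondary point to state carefully is the ``opening up'' of the cone --- that $t\w$ is feasible for small $t>0$ --- which uses essentially that the $k$ components of $\x$ indexed by $I_2$ are strictly inside $(0,1)$; this is precisely the defining feature of a box-constrained $k$-sparse vector, and it is the step that makes the purely conic condition on $\mathcal{C}$ equivalent to the optimality statement.
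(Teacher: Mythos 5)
Your argument is correct and is exactly the null-space/perturbation-cone reasoning that the paper's one-line proof defers to (``a couple of simple modifications of the arguments leading up to Theorem \ref{thm:posthmregposcond}''): identifying feasible perturbations $\w=z-\x$ with the sign-constrained null-space cone, using $\|z\|_1-\|\x\|_1=\1^T\w$ by nonnegativity, and using the strict interiority of the $I_2$ block to make the cone condition equivalent to optimality. The caveats you flag --- that $\x$ must be assumed to be \emph{the} solution of (\ref{eq:boxl0}) and that the boundary case $\1^T\w=0$ only destroys uniqueness --- are handled identically in the paper via its standing uniqueness/genericity assumptions, so there is no gap.
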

\begin{proof}
  Follows by a couple of simple modifications of the arguments leading up to Theorem \ref{thm:posthmregposcond}.
\end{proof}

As was done in Section \ref{sec:posl1}, we will below try to follow the strategy presented in \cite{Stojnicl1RegPosfinn}. Here though, many of the adjustments already made in Section \ref{sec:posl1} will be utilized as well. As usual, we will try to emphasize any new additional adjustments and avoid the repetitions of the already made ones. Now, to exploit the results given in the above theorem, we again start by setting
\begin{equation}
C^{(box)}_w\triangleq\{\w\in \mR^n| \quad -\sum_{i=\kmu+1}^{\kmu+k} \w_i\geq \sum_{i=1}^{\kmu}\w_{i}+\sum_{i=\kmu+k+1}^{n}\w_{i},
\w_i\geq 0,\kmu+k+1\leq i\leq n,\w_i\leq 0,1\leq i\leq \kmu\}.\label{eq:defSw}
\end{equation}
Clearly, $C^{(box)}_w$ is a polyhedral cone and analogously to (\ref{eq:posanal3}) we have
\begin{equation}
p^{(box)}_{err}(k,m,n,\kmu)=2\sum_{l=m+2j+1,j\in \mN_0}^{n} \sum_{F^{(l,box)}\in \calF^{(l,box)}}\phiint(0,F^{(l,box)})\phiext(F^{(l,box)},C^{(box)}_w),\label{eq:anal3}
\end{equation}
where $p^{(box)}_{err}(k,m,n,\kmu)$ is the probability that the solution of (\ref{eq:l1}) is not the binary $k$-sparse solution of (\ref{eq:binl0}), $F^{(l,box)}$ is an $l$-face of $C^{(box)}_w$, and $\calF^{(l,box)}$ is the set of all $l$-faces of $C^{(box)}_w$ (as earlier, $\phiint(\cdot,\cdot)$ and $\phiext(\cdot,\cdot)$ are the so-called internal and external angles). To be able to use (\ref{eq:anal3}) to compute $p^{(box)}_{err}(k,m,n,\kmu)$ one needs to compute the corresponding $\phiint(\cdot,\cdot)$ and $\phiext(\cdot,\cdot)$ as well. Below we show how it can be done.

\subsection{Internal angles}
\label{sec:intang}

In this section we handle $\phiint(0,F^{(l,box)})$. As in Section \ref{sec:posl1}, we split the set of all $l$-faces $\calF^{(l,box)}$ into two sets, $\calF^{(l,box)}_1$ and $\calF^{(l,box)}_2$ in the following way. Set
\begin{eqnarray}
I_l & \triangleq  & \{1,2,\dots,\kmu\} \nonumber \\
I_r & \triangleq  & \{\kmu+k+1,\kmu+k+2,\dots,n\} \nonumber \\
l_1^{(min)} & \triangleq & \max(0,n-l-1-(n-k-\kmu)) \nonumber \\
l_1^{(max)} & \triangleq & \min(\kmu-1,n-l-1) \nonumber \\
l_2^{(min)} & \triangleq & \max(0,n-l-(n-k-\kmu)) \nonumber \\
l_2^{(max)} & \triangleq & \min(\kmu-1,n-l),\label{eq:intanal1}
\end{eqnarray}
and write
\begin{equation}
\calF^{(l,box)}_1 \triangleq \cup_{l_1=l_1^{(min)}}^{l_1^{(max)}}\calF^{(l,l_1,box)}_1, l\in \{0,1,\dots,n-1\},\label{eq:intanal1a}
\end{equation}
where
\begin{multline}
\calF^{(l,l_1,box)}_1 \triangleq\{\w\in \mR^n| \quad -\sum_{i=\kmu+1}^{\kmu+k} \w_i= \sum_{i=1}^{\kmu}\w_{i}+\sum_{i=\kmu+k+1}^{n}\w_{i},\w_{I_l}\leq 0,\w_{I_r}\geq 0, \\
\w_{I^{(l,l_1,box)}_{1,r}}=0,I^{(l,l_1,box)}_{1,r}\subset I_r,|I^{(l,l_1,box)}_{1,r}|=n-l-1-l_1 \quad \mbox{and} \quad \w_{I^{(l,l_1,box)}_{1,l}}=0,I^{(l,l_1,box)}_{1,l}\subset I_l,|I^{(l,l_1,box)}_{1,l}|=l_1\},\label{eq:intanal2}
\end{multline}
and
\begin{equation}
\calF^{(l,box)}_2 \triangleq \cup_{l_2=l_2^{(min)}}^{l_2^{(max)}}\calF^{(l,l_2,box)}_2, l\in \{1,2,\dots,n\},\label{eq:intanal2a}
\end{equation}
where
\begin{multline}
\calF^{(l,l_2,box)}_1 \triangleq\{\w\in \mR^n| \quad -\sum_{i=\kmu+1}^{\kmu+k} \w_i\geq \sum_{i=1}^{\kmu}\w_{i}+\sum_{i=\kmu+k+1}^{n}\w_{i},\w_{I_l}\leq 0,\w_{I_r}\geq 0, \\
\w_{I^{(l,l_2,box)}_{2,r}}=0,I^{(l,l_2,box)}_{2,r}\subset I_r,|I^{(l,l_2,box)}_{2,r}|=n-l-l_1 \quad \mbox{and} \quad \w_{I^{(l,l_2,box)}_{2,l}}=0,I^{(l,l_2,box)}_{2,l}\subset I_l,|I^{(l,l_2,box)}_{2,l}|=l_1\}.\label{eq:intanal3}
\end{multline}
Simple combinatorial considerations give the following cardinalities of sets $\calF^{(l,l_1,box)}_1$ and $\calF^{(l,l_1bin)}_2$
\begin{equation}
c^{(l,l_1,box)}_1\triangleq |\calF^{(l,l_1,box)}_1|=\binom{\kmu}{l_1}\binom{n-k-\kmu}{n-l-1-l_1},\label{eq:intanal4}
\end{equation}
and
\begin{equation}
c^{(l,l_2,box)}_2\triangleq |\calF^{(l,l_2,box)}_2|=\binom{\kmu}{l_2}\binom{n-k-\kmu}{n-l-l_2}.\label{eq:intanal5}
\end{equation}
All of the above helps transform (\ref{eq:anal3}) so that it can be written in an way analogous to (\ref{eq:posintanal6})
\begin{eqnarray}
p^{(box)}_{err}(k,m,n,\kmu) & = & 2\sum_{l=m+2j+1,j\in \mN_0}^{n} ( \sum_{F^{(l,box)}_1\in \calF^{(l,box)}_1}\phiint(0,F^{(l,box)}_1)\phiext(F^{(l,box)}_1,C^{(box)}_w)\nonumber \\
& & + \sum_{F^{(l,box)}_2\in \calF^{(l,box)}_2}\phiint(0,F^{(l,box)}_2)\phiext(F^{(l,box)}_2,C^{(box)}_w))\nonumber \\
& = & 2 ( \sum_{l=m+2j+1,j\in \mN_0}^{n-1} \sum_{l_1=l^{(min)}_1}^{l^{(max)}_1} c^{(l,l_1,box)}_1\phiint(0,F^{(l,l_1,box)}_1)\phiext(F^{(l,l_1,box)}_1,C^{(box)}_w) \nonumber \\
& & + \sum_{l=m+2j+1,j\in \mN_0}^{n} \sum_{l_2=l^{(min)}_2}^{l^{(max)}_2} c^{(l,l_2,box)}_2 \phiint(0,F^{(l,l_2,box)}_2)\phiext(F^{(l,l_2,box)}_2,C^{(box)}_w)),\nonumber \\
\label{eq:intanal6}
\end{eqnarray}
Relying on symmtery as in Section \ref{sec:posl1}
(and ultimately \cite{Stojnicl1RegPosfinn}) we can basically choose $F^{(l,l_1,box)}_1$ and $F^{(l,l_2,box)}_2$ as any of the elements from sets $\calF^{(l,l_1,box)}_1$ and $\calF^{(l,l_2,box)}_2$, respectively. For the concreteness we choose
\begin{eqnarray}
  I^{(l,l_1,box)}_{1,l} & = & \{1,2,\dots,l_1\} \nonumber\\
  I^{(l,l_1,box)}_{1,r} & = & \{l+l_1+2,l+l_1+3,\dots,n\} \nonumber\\
  I^{(l,l_2,box)}_{2,l} & = & \{1,2,\dots,l_2\} \nonumber\\
  I^{(l,l_2,box)}_{2,r} & = & \{l+l_2+1,l+l_2+2,\dots,n\},\label{eq:intanal6a}
\end{eqnarray}
and consequently
\begin{multline}
F^{(l,l_1,box)}_1 =\{\w\in \mR^n| \quad  -\sum_{i=\kmu+1}^{\kmu+k}\w_i=\sum_{i=l_1+1}^{\kmu} \w_{i}+\sum_{i=\kmu+k+1}^{n}\w_{i},\\
\w_{l_1+1:\kmu}\leq 0,\w_{1:l_1}=0,\w_{\kmu+k+1:l+l_1+1}\geq 0,\w_{l+l_1+2:n}=0\},\label{eq:intanal7}
\end{multline}
and
\begin{multline}
F^{(l,l_2,box)}_2 =\{\w\in \mR^n| \quad  -\sum_{i=\kmu+1}^{\kmu+k}\w_i\geq \sum_{i=l_1+1}^{\kmu} \w_{i}+\sum_{i=\kmu+k+1}^{n}\w_{i},\\
\w_{l_2+1:\kmu}\leq 0,\w_{1:l_2}=0,\w_{\kmu+k+1:l+l_2}\geq 0,\w_{l+l_2+1:n}=0\},\label{eq:intanal8}
\end{multline}
In the following subsections we will determine separately $\phiint(0,F^{(l,l_1,box)}_1)$ and $\phiint(0,F^{(l,l_2,box)}_2)$.

\subsubsection{Computing $\phiint(0,F^{(l,l_1,box)}_1)$}
\label{sec:int1ang}

As in Section \ref{sec:posl1}, we will rely on the GCOB strategy. Moreover, we carefully indexed everything so that we can utilize the columns of $B^{(l,l_1,box)}_{int,1}=B^{(l,l_1,bin)}_{int,1}$ as an orthonormal basis. What is left to do is to determine the set of $\g$'s (say, $G^{(l,l_1,box)}_1$) such that $B^{(l,l_1,box)}_{int,1}\g\in F^{(l,l_1,box)}_1$. In other words we are looking for
\begin{equation}
G^{(l,l_1,box)}_1 =\{\g\in \mR^l| \w=B^{(l,l_1,box)}_{int,1}\g \quad \mbox{and}\quad \w_{l_1+1:k}\leq 0,\w_{k+1:l+l_1+1}\geq 0\}.\label{eq:int1anal2}
\end{equation}
Relying on the structure of $B^{(l,l_1,box)}_{int,1}$, (\ref{eq:int1anal2}) can be written in the following equivalent form as well
\begin{equation}
G^{(l,l_1,box)}_1 =\{\g\in \mR^l| \w_{l_1+1:(l+l_1+1)}=\begin{bmatrix}
            \begin{bmatrix}
              B \\
              \0_{1\times (l-1)}
            \end{bmatrix}  & \begin{bmatrix}
              -\1_{l\times 1} \\
              l
            \end{bmatrix}\frac{1}{\sqrt{l^2+l}}
          \end{bmatrix}\g \quad \mbox{and}\quad \w_{l_1+1:k}\leq 0,\w_{\kmu+k+1:l+l_1+1}\geq 0\}.\label{eq:int1anal3}
\end{equation}
As in Section \ref{sec:posl1}, we can now repeat step by step of the remaining considerations from \cite{Stojnicl1RegPosfinn} to finally arrive at the following characterization of $\phiint(0,F^{(l,l_1,box)}_1)$
\begin{eqnarray}
\phiint(0,F^{(l,l_1,box)}_1) &  = & \frac{J}{(2\pi)^{\frac{l}{2}}}\int\limits_{\substack{-\1_{1\times l}\w_{l_1+1:l+l_1}\geq 0,\w_{l_1+1:\kmu}\leq 0,\\\w_{\kmu+k+1:l+l_1}\geq 0}}
e^{-\frac{\|\w_{l_1+1:l+l_1}\|^2+
             ((-\1_{1\times l})\w_{l_1+1:l+l_1})^2}{2}}d\w_{l_1+1:l+l_1}\nonumber \\
&  = & \frac{J2^{l-k}}{2^{l-k}(2\pi)^{\frac{l}{2}}}\int\limits_{\substack{-\1_{1\times l}\w_{l_1+1:l+l_1}\geq 0,\w_{l_1+1:\kmu}\leq 0,\\\w_{\kmu+k+1:l+l_1}\geq 0}}
e^{-\frac{\|\w_{l_1+1:l+l_1}\|^2+
             ((-\1_{1\times l})\w_{l_1+1:l+l_1})^2}{2}}d\w_{l_1+1:l+l_1}\nonumber \\
&  = & \frac{J}{2^{l-k}}\int_{x\geq 0}f_{x^{(box)}}(x)e^{-\frac{x^2}{2}}dx,\label{eq:int1anal8}
\end{eqnarray}
where $J=\sqrt{l+1}$ and $f_{x^{(box)}}(\cdot)$ is the pdf of the random variable $x^{(box)}=-\1_{1\times l}\w_{l_1+1:l+l_1}$, where $\w_i,l_1+1 \leq i\leq \kmu$, are  i.i.d. negative standard half-normals, $\w_i,\kmu +1 \leq i\leq \kmu+k$, are  i.i.d. standard normals, and $\w_i,k+1 \leq i\leq l+l_1$, are  i.i.d. standard half-normals. Following the strategy of Section \ref{sec:posl1}, to determine $f_{x^{(box)}}(\cdot)$, we will first compute the characteristic function of $x^{(box)}$. We have
\begin{eqnarray}
E e^{itx^{(box)}} & = & \frac{2^{l}}{(2\pi)^{\frac{l}{2}}}\int_{\w_{l_1+1:\kmu}\leq 0,\w_{\kmu+k+1:l+l_1}\geq 0}e^{-\frac{\w_{l_1+1:l+l_1}^T\w_{l_1+1:l+l_1}}{2}-it\1_{1\times l}\w_{l_1+1:l+l_1}}d\w_{l_1+1:l+l_1} \nonumber \\
& = & \lp 1+i\erfi\lp\frac{t}{\sqrt{2}}\rp\rp^{\kmu-l_1} \lp 1-i\erfi\lp\frac{t}{\sqrt{2}}\rp\rp^{l-k-\kmu+l_1} e^{-\frac{lt^2}{2}},\label{eq:int1anal9}
\end{eqnarray}
and
\begin{equation}
f_{x^{(box)}}(x)=\frac{1}{2\pi}\int_{-\infty}^{\infty} \lp 1+i\erfi\lp\frac{t}{\sqrt{2}}\rp\rp^{\kmu-l_1} \lp 1-i\erfi\lp\frac{t}{\sqrt{2}}\rp\rp^{l-k-\kmu+l_1} e^{-\frac{lt^2}{2}}e^{-itx}dt,\label{eq:int1anal10}
\end{equation}
where as earlier
\begin{equation}
\erfi(y)=-i\erf(iy)=\frac{2}{\sqrt{\pi}}\int_{0}^{y}e^{\frac{z^2}{2}}dz.\label{eq:int1anal10a}
\end{equation}
A combination of (\ref{eq:int1anal8}) and (\ref{eq:int1anal10}) finally gives
\begin{eqnarray}
\phiint(0,F^{(l,l_1,box)}_1) &  = &  \frac{J}{2^{l}}\int_{x\geq 0}f_{x^{(box)}}(x)e^{-\frac{x^2}{2}}dx\nonumber \\
& = & \frac{J}{2^{l-k}}\int_{x\geq 0}\frac{1}{2\pi}\int_{-\infty}^{\infty}
\lp 1+i\erfi\lp\frac{t}{\sqrt{2}}\rp\rp^{\kmu-l_1} \nonumber \\
& & \times
\lp 1-i\erfi\lp\frac{t}{\sqrt{2}}\rp\rp^{l-k-\kmu+l_1} e^{-\frac{lt^2}{2}}e^{-itx}dt e^{-\frac{x^2}{2}}dx\nonumber \\
& = & \frac{J}{2^{l-k+1}\sqrt{2\pi}}\int_{-\infty}^{\infty}\frac{2}{\sqrt{2\pi}}\int_{x\geq 0} \lp 1+i\erfi\lp\frac{t}{\sqrt{2}}\rp\rp^{\kmu-l_1} \nonumber \\
& & \times
\lp 1-i\erfi\lp\frac{t}{\sqrt{2}}\rp\rp^{l-k-\kmu+l_1} e^{-\frac{lt^2}{2}}e^{-itx} e^{-\frac{x^2}{2}}dxdt\nonumber \\
& = & \frac{J}{2^{l-k+1}\sqrt{2\pi}}\int_{-\infty}^{\infty} \lp1+i\erfi\lp\frac{t}{\sqrt{2}}\rp\rp^{\kmu-l_1} \lp1-i\erfi\lp\frac{t}{\sqrt{2}}\rp\rp^{l-k-\kmu+l_1+1} e^{-\frac{(l+1)t^2}{2}}dt.\nonumber \\
\label{eq:int1anal11}
\end{eqnarray}

\subsubsection{Computing $\phiint(0,F^{(l,l_2,box)}_2)$}
\label{sec:int2ang}

Following what was done in Section \ref{sec:posint2ang} we have for $\phiint(0,F^{(l,l_2,box)}_2)$
\begin{eqnarray}
\phiint(0,F^{(l,l_2,box)}_2) &  = & \frac{1}{(2\pi)^{\frac{l}{2}}}\int_{-\1_{1\times l}\w_{l_1+1:l+l_1}\geq 0,\w_{l_1+1:\kmu}\leq 0,\w_{\kmu+k+1:l+l_1}\geq 0}e^{-\frac{\w_{l_1+1:l+l_1}^T\w_{l_1+1:l+l_1}}{2}}d\w_{l_1+1:l+l_1}\nonumber \\
& = & \frac{2^{l-k}}{2^{l-k}(2\pi)^{\frac{l}{2}}}\int_{-\1_{1\times l}\w_{l_1+1:l+l_1}\geq 0,\w_{l_1+1:\kmu}\leq 0,\w_{\kmu+k+1:l+l_1}\geq 0}e^{-\frac{\w_{l_1+1:l+l_1}^T\w_{l_1+1:l+l_1}}{2}}d\w_{l_1+1:l+l_1}\nonumber \\
&  = & \frac{1}{2^{l-k}}\int_{x\geq 0}f_{x^{(box)}}(x)dx.\label{eq:int2anal8}
\end{eqnarray}
From (\ref{eq:int1anal10}) and (\ref{eq:int2anal8}) we obtain
\begin{eqnarray}
\phiint(0,F^{(l,l_2,box)}_2) &  = &  \frac{1}{2^{l-k}}\int_{x\geq 0}f_{x^{(box)}}(x)dx\nonumber \\
& = & \frac{1}{2^{l-k}}\int_{x\geq 0}\frac{1}{2\pi}\int_{-\infty}^{\infty}
\lp1+i\erfi\lp\frac{t}{\sqrt{2}}\rp\rp^{\kmu-l_1}
\lp1-i\erfi\lp\frac{t}{\sqrt{2}}\rp\rp^{l-k-\kmu+l_1} e^{-\frac{lt^2}{2}}e^{-itx}dt dx.\nonumber \\
\label{eq:int2anal11}
\end{eqnarray}
A bit of further transformation also gives
\begin{eqnarray}
\phiint(0,F^{(l,l_2,box)}_2)
& = & \frac{1}{2^{l-k}}\int_{x\geq 0}\frac{1}{2\pi}\int_{-\infty}^{\infty}
\lp1+i\erfi\lp\frac{t}{\sqrt{2}}\rp\rp^{\kmu-l_1}
\lp1-i\erfi\lp\frac{t}{\sqrt{2}}\rp\rp^{l-k-\kmu+l_1} e^{-\frac{lt^2}{2}}e^{-itx}dt dx\nonumber \\
& = & \frac{1}{2^{l-k}}\frac{1}{2\pi}\times \nonumber \\
& & \lim_{x\rightarrow \infty}\lim_{\epsilon\rightarrow 0_+}(\int_{-\infty}^{-\epsilon} \lp1+i\erfi\lp\frac{t}{\sqrt{2}}\rp\rp^{\kmu-l_1}\lp1-i\erfi\lp\frac{t}{\sqrt{2}}\rp\rp^{l-k-\kmu+l_1} e^{-\frac{lt^2}{2}}\frac{(1-e^{-itx})}{it}dt\nonumber \\
& & +\int_{\epsilon}^{\infty} \lp1+i\erfi\lp\frac{t}{\sqrt{2}}\rp\rp^{\kmu-l_1} \lp1-i\erfi\lp\frac{t}{\sqrt{2}}\rp\rp^{l-k-\kmu+l_1} e^{-\frac{lt^2}{2}}\frac{(1-e^{-itx})}{it}dt).
\label{eq:int2anal12}
\end{eqnarray}


\subsection{External angles}
\label{sec:extang}

Similarly to what we had for the internal angles, there are two types of the external angles of interest, $\phiext(F^{(l,l_1,box)}_1,C^{(box)}_w)$ and $\phiext(F^{(l,l_2,box)}_2,C^{(box)}_w)$. However, to compute these angles one can utilize already computed external angles for the binary case. Namely,
note that for face $F^{(l,l_1,box)}_1$ the positive hull of the outward normals to the meeting hyperplanes is given by
\begin{equation}\label{eq:ext1anal1}
phull^{(l,l_1,box)}_{ext,1}\triangleq  -pos(-e_{1},-e_{2},\dots,-e_{l_1},e_{l+l_1+2},e_{l+l_1+3},\dots,e_{n},-\1_{n\times 1})=phull^{(l,l_1,bin)}_{ext,1}.
\end{equation}
This basically immediately implies
\begin{eqnarray}
\phiext(F^{(l,l_1,box)}_1,C^{(box)}_w) &  = & \phiext(F^{(l,l_1,box)}_1,C^{(box)}_w) \nonumber \\
& = & \frac{1}{(2\pi)^{\frac{l}{2}}}\int_{\g_{n-l}\geq 0}e^{-\frac{\g_{n-l}^2}{2}}
\lp \frac{1}{2}\erfc\lp\frac{\g_{n-l}}{\sqrt{2}\sqrt{l+1}}\rp\rp^{l_1}
\lp \frac{1}{2}\erfc\lp\frac{-\g_{n-l}}{\sqrt{2}\sqrt{l+1}}\rp\rp^{n-l-1-l_1} d\g_{n-l}. \nonumber \\
\label{eq:ext1anal10}
\end{eqnarray}
Similarly we have that the positive hull of the outward normals for face $F^{(l,l_2,box)}_2$ is given by
\begin{equation}\label{eq:ext2anal1}
phull^{(l,l_2,box)}_{ext,2}\triangleq  -pos(-e_{1},-e_{2},\dots,-e_{l_2},e_{l+l_2+1},e_{l+l_2+2},\dots,e_{n})=phull^{(l,l_2,bin)}_{ext,2}.
\end{equation}
One then automatically has
\begin{equation}
\phiext(F^{(l,l_2,box)}_2,C^{(box)}_w) =\phiext(F^{(l,l_2,bin)}_2,C^{(bin)}_w) =  \frac{1}{2^{n-l}}.
\label{eq:ext2anal2}
\end{equation}
What we presented above is then enough to determine $p^{(box)}_{err}(k,m,n,\kmu)$. The following theorem summarizes the obtained results.
\begin{theorem}(Exact \emph{box} $\ell_1$'s performance characterization -- finite dimensions)
Let $A$ be an $m\times n$ matrix in (\ref{eq:system})
with i.i.d. standard normal components (or, alternatively, with the null-space uniformly distributed in the Grassmanian). Let
the unknown $\x$ in (\ref{eq:system}) be box-constrained $k$-sparse with $\kmu$ of its components equal to one. Let $p^{(box)}_{err}(k,m,n,\kmu)$ be the probability that the solutions of (\ref{eq:boxl0}) and (\ref{eq:l1}) do \emph{not} coincide. Then
\begin{eqnarray}
p^{(box)}_{err}(k,m,n,\kmu) & = & 2 ( \sum_{l=m+2j+1,j\in \mN_0}^{n-1} \sum_{l_1=l^{(min)}_1}^{l^{(max)}_1} c^{(l,l_1,box)}_1\phiint(0,F^{(l,l_1,box)}_1)\phiext(F^{(l,l_1,box)}_1,C^{(box)}_w)\nonumber \\
& & + \sum_{l=m+2j+1,j\in \mN_0}^{n} \sum_{l_2=l^{(min)}_2}^{l^{(max)}_2} c^{(l,l_2,box)}_2 \phiint(0,F^{(l,l_2,box)}_2)\phiext(F^{(l,l_2,box)}_2,C^{(box)}_w)),
\label{eq:finalthm1}
\end{eqnarray}
where $l^{(min)}_1$, $l^{(max)}_1$, $l^{(min)}_2$, and $l^{(max)}_2$ are as given in (\ref{eq:intanal1}), and $c^{(l,l_1,box)}_1$, $ c^{(l,l_2,box)}_2$, $\phiint(0,F^{(l,l_1,box)}_1)$, $\phiint(0,F^{(l,l_2,box)}_2)$, $\phiext(F^{(l,l_1,box)}_1,C^{(box)}_w)$, and $\phiext(F^{(l,l_2,box)}_2,C^{(box)}_w)$ are as given in (\ref{eq:intanal4}), (\ref{eq:intanal5}), (\ref{eq:int1anal11}), (\ref{eq:int2anal12}), (\ref{eq:ext1anal10}), and  (\ref{eq:ext2anal2}), respectively.
\label{thm:finalperr}
\end{theorem}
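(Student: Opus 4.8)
The plan is to assemble, exactly as was done for Theorem~\ref{thm:posfinalperr} in the binary case, the ingredients established in Sections~\ref{sec:intang} and \ref{sec:extang}. First I would invoke Theorem~\ref{thm:thmregposcond} to identify the event that the box $\ell_1$ in (\ref{eq:l1}) fails to recover the box-constrained $k$-sparse $\x$ with the event that the null space of $A$ meets the polyhedral cone $C^{(box)}_w$ of (\ref{eq:defSw}) nontrivially. Since $A$ has i.i.d.\ standard normal entries (equivalently, a uniformly distributed null space), the conic integral-geometric / Grassmann-angle machinery imported from \cite{Stojnicl1RegPosfinn} applies verbatim and yields $p^{(box)}_{err}(k,m,n,\kmu)$ as the face-angle sum (\ref{eq:anal3}): each $l$-face $F$ of $C^{(box)}_w$, with $l$ running over $m+2j+1$, $j\in\mN_0$, contributes $\phiint(0,F)\,\phiext(F,C^{(box)}_w)$.

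Next I would carry out the combinatorial bookkeeping of the faces. Because $C^{(box)}_w$ is cut out by one inequality together with sign constraints on the two disjoint blocks $I_l$ (size $\kmu$) and $I_r$ (size $n-k-\kmu$), every $l$-face is obtained by declaring the defining inequality either tight or strict and then zeroing a subset of the coordinates in $I_l\cup I_r$; this gives the exhaustive split $\calF^{(l,box)}=\calF^{(l,box)}_1\cup\calF^{(l,box)}_2$ of (\ref{eq:intanal1a})--(\ref{eq:intanal3}), with the admissible ranges of $l_1$ and $l_2$ forced by nonvanishing of the relevant binomial coefficients, and the cardinalities $c^{(l,l_1,box)}_1$, $c^{(l,l_2,box)}_2$ of (\ref{eq:intanal4})--(\ref{eq:intanal5}). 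By the permutation symmetry of the construction (coordinates are freely interchangeable within $I_l$, within $I_r$, and within the ``nonzero'' middle block), all faces in a family with a fixed $l_1$ (resp.\ $l_2$) share the same internal and external angles, so it suffices to evaluate them at the representatives (\ref{eq:intanal7})--(\ref{eq:intanal8}); this reduces (\ref{eq:anal3}) to the double sum (\ref{eq:intanal6}).

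Finally I would substitute the closed forms already derived in the preceding subsections: the internal angles $\phiint(0,F^{(l,l_1,box)}_1)$ and $\phiint(0,F^{(l,l_2,box)}_2)$, obtained by the GCOB method with the orthonormal basis $B^{(l,l_1,box)}_{int,1}=B^{(l,l_1,bin)}_{int,1}$ followed by the characteristic-function inversion for the mixed half-normal/normal variable $x^{(box)}$, giving (\ref{eq:int1anal11}) and (\ref{eq:int2anal12}); and the external angles, which coincide with their binary counterparts because the positive hulls of the outward normals to the meeting hyperplanes are literally unchanged (see (\ref{eq:ext1anal1}) and (\ref{eq:ext2anal1})), giving (\ref{eq:ext1anal10}) and $\phiext(F^{(l,l_2,box)}_2,C^{(box)}_w)=2^{-(n-l)}$ in (\ref{eq:ext2anal2}). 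Plugging these six quantities into (\ref{eq:intanal6}) produces (\ref{eq:finalthm1}).

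The bulk of the work is thus already contained in the lemmas feeding the theorem, and the assembly itself is routine; the one genuinely delicate point is the internal-angle evaluation, specifically keeping the normalization constants straight. One must check that reordering the rows of $B^{(l,l_1,box)}_{int,1}$ returns exactly the setting of \cite{Stojnicl1RegPosfinn}, that $x^{(box)}$ has characteristic function $(1+i\erfi(t/\sqrt2))^{\kmu-l_1}(1-i\erfi(t/\sqrt2))^{l-k-\kmu+l_1}e^{-lt^2/2}$, and that the extra factor $2^{k}$ relative to the binary case --- coming from the $k$ unconstrained, i.e.\ full standard normal, middle coordinates $\w_{\kmu+1:\kmu+k}$ --- is carried through the $2^{l-k}$ (rather than $2^{l}$) normalizations. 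Once those constants are pinned down exactly as in Section~\ref{sec:posl1}, the remainder is substitution.
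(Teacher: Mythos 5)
Your proposal is correct and follows essentially the same route as the paper: the face-angle decomposition (\ref{eq:anal3}) over the cone $C^{(box)}_w$, the split into the two face families with cardinalities (\ref{eq:intanal4})--(\ref{eq:intanal5}), the GCOB evaluation of the internal angles via the characteristic function of the mixed half-normal/normal variable $x^{(box)}$, and the observation that the external angles reduce to their binary counterparts. You also correctly flag the one genuinely delicate bookkeeping point, the $2^{l-k}$ (rather than $2^{l}$) normalization coming from the $k$ unconstrained standard normal middle coordinates, which is exactly where the box case departs from the binary one.
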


\subsection{Simulations and theoretical results -- box $\ell_1$}
\label{sec:boxthnumresults}

What Theorem \ref{thm:posfinalperr} was for computing $p^{(bin)}_{err}(k,m,n,\kmu)$ that is now
Theorem \ref{thm:finalperr} for computing $p^{(box)}_{err}(k,m,n,\kmu)$. The concrete values that we obtained for $p^{(box)}_{err}(k,m,n,\kmu)$ utilizing Theorem \ref{thm:finalperr} are shown in Figure \ref{fig:boxl1regnonperr} and Table \ref{tab:boxl1regnonperrtab1}. In addition to that we also show the corresponding results obtained for the same set of systems parameters through numerical simulations. For all results shown in Figure \ref{fig:boxl1regnonperr} and Table \ref{tab:boxl1regnonperrtab1} we fixed $k=5$, $n=30$, and $\kmu=5$, and varied $m$ within the transition zone so that $p^{(box)}_{err}(k,m,n,\kmu)$ changes from one to zero. As in Section \ref{sec:posl1}, in Table \ref{tab:boxl1regnonperrtab1} we also present the number of numerical experiments that were run as well as the number of them that did not result in having the solution of (\ref{eq:l1}) match the box-constrained solution of (\ref{eq:boxl0}). We again observe an overwhelming agreement between the simulated and the theoretical results.

\begin{figure}[htb]
\begin{minipage}[b]{.5\linewidth}
\centering
\centerline{\epsfig{figure=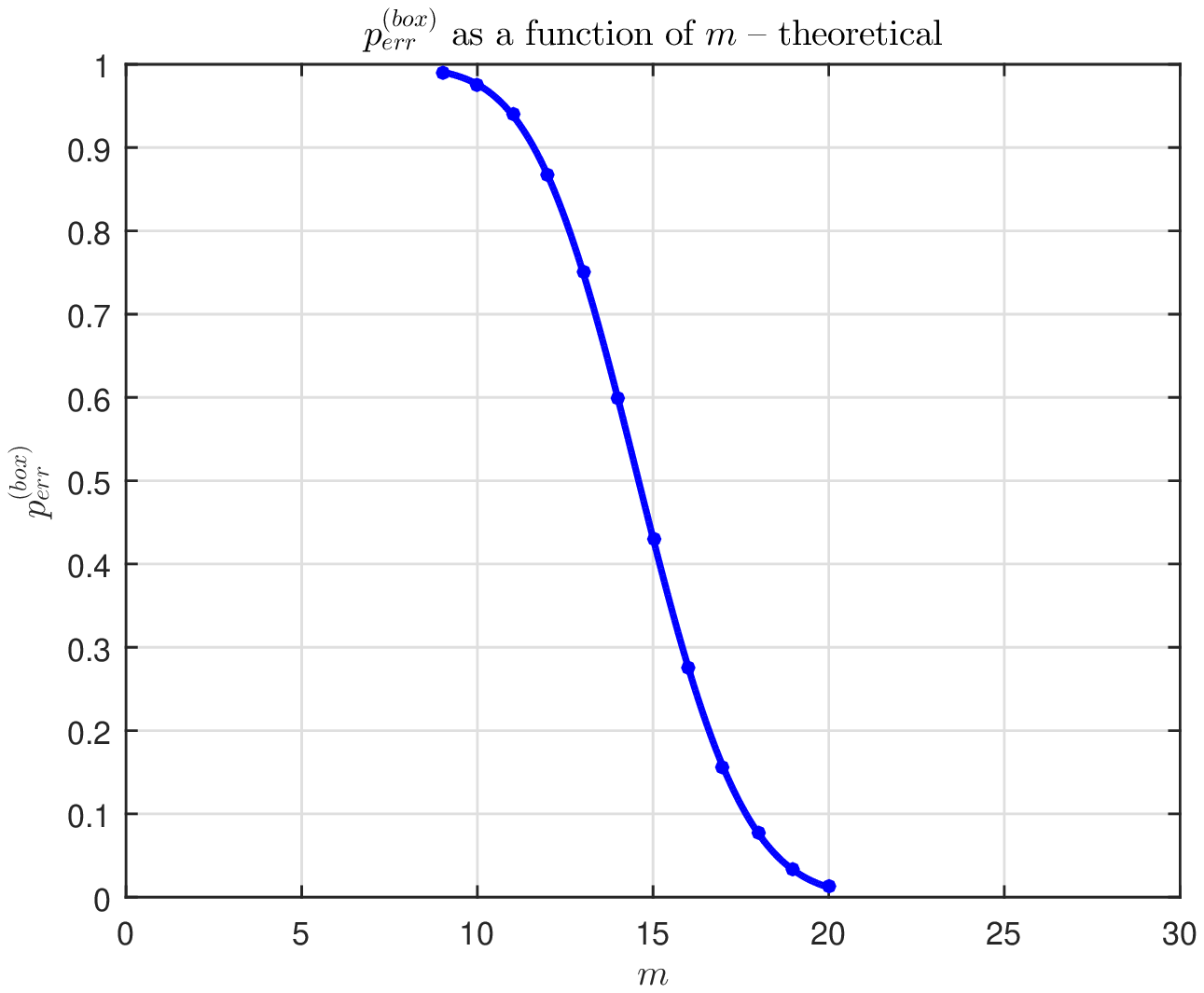,width=9cm,height=7cm}}
\end{minipage}
\begin{minipage}[b]{.5\linewidth}
\centering
\centerline{\epsfig{figure=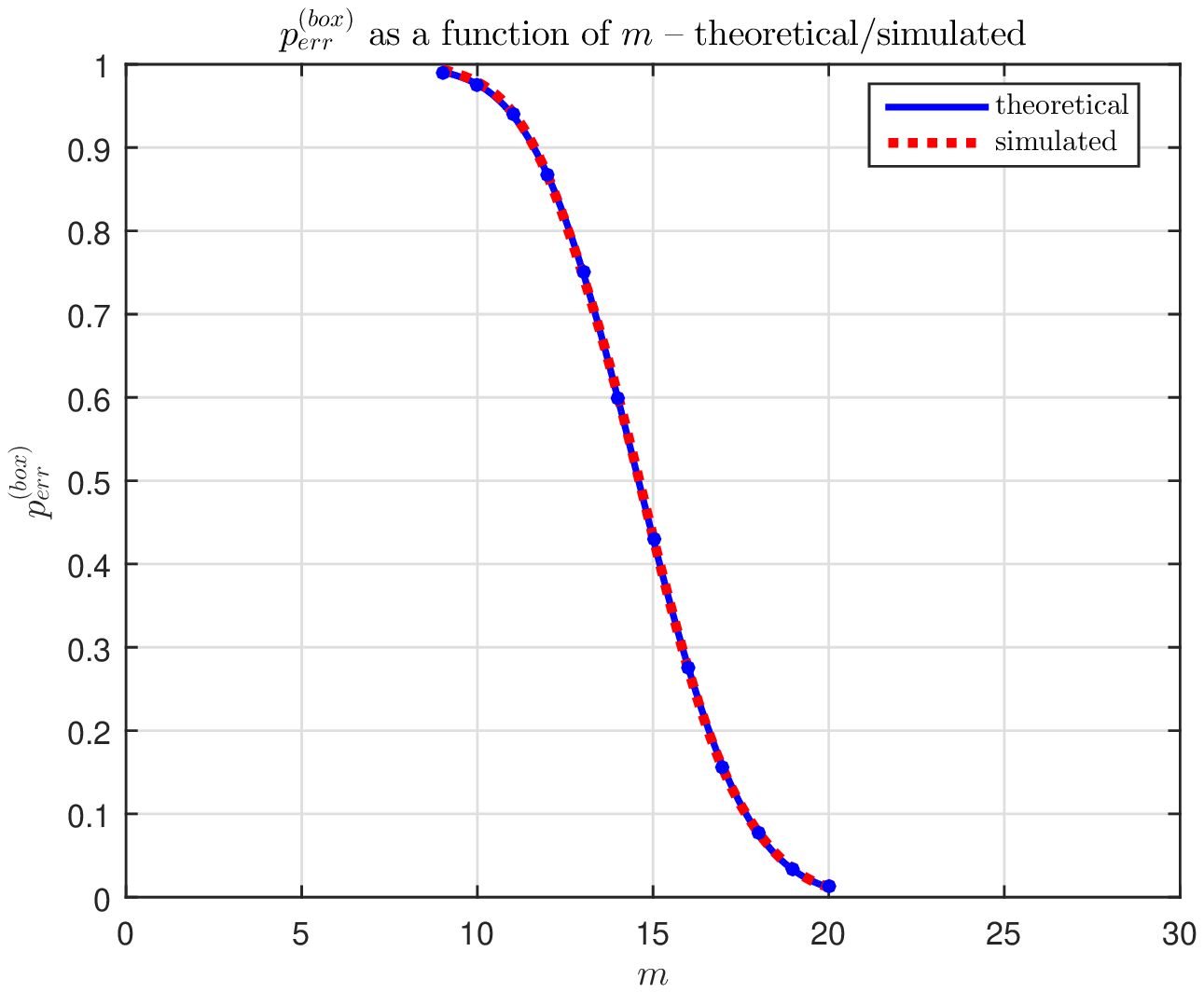,width=9cm,height=7cm}}
\end{minipage}
\caption{$p^{(box)}_{err}(k,m,n,\kmu)$ as a function of $m$ ($k=5$, $n=30$, $\kmu=5$); left -- theory; right -- simulations}
\label{fig:boxl1regnonperr}
\end{figure}

\begin{table}[h]
\caption{Simulated and theoretical results for $p^{(box)}_{err}(k,m,n,\kmu)$; $k=5$, $n=30$, $\kmu=5$}\vspace{.1in}
\hspace{-0in}\centering
\begin{tabular}{||c||c|c|c|c|c|c||}\hline\hline
$m$                            & $ 12 $ & $ 13 $ & $ 14 $ & $ 15 $ & $ 16 $ & $ 17 $ \\ \hline \hline
$\#$ of failures               & $ 10935 $ & $ 9371 $ & $ 7682 $ & $ 7802 $ & $ 4948 $ & $ 2776 $ \\ \hline
$\#$ of repetitions            & $ 12624 $ & $ 12537 $ & $ 12754 $ & $ 18009 $ & $ 18039 $ & $ 18116 $ \\ \hline \hline
$p^{(box)}_{err}$ -- simulation& $ \bl{\mathbf{0.8662}} $ & $ \bl{\mathbf{0.7475}} $ & $ \bl{\mathbf{0.6023}} $ & $ \bl{\mathbf{0.4332}} $ & $  \bl{\mathbf{0.2743}} $ & $ \bl{\mathbf{0.1532}} $ \\ \hline \hline
$p^{(box)}_{err}$ -- theory    & $ \mathbf{0.8668} $ & $ \mathbf{0.7512} $ & $ \mathbf{0.5988} $ & $ \mathbf{0.4312} $ & $ \mathbf{0.2764} $ & $ \mathbf{0.1558} $ \\ \hline \hline
\end{tabular}
\label{tab:boxl1regnonperrtab1}
\end{table}

\subsection{Asymptotics}
\label{sec:asym}

As we have done in Section \ref{sec:posasym} for the binary $\ell_1$, in this section we provide a bridge between the above finite dimensional considerations and the corresponding asymptotic ones from \cite{Stojnicl1BnBxasymldp} for the box $\ell_1$. Since the box $\ell_1$ also exhibits the phase-transition behavior, for any $\beta$ there will be a $\alpha^{(box)}_w$, such that for $\alpha>\alpha^{(box)}_w$ $p^{(box)}_{err}(k,m,n,\kmu)$ goes to zero at an exponential rate and for $\alpha<\alpha^{(box)}_w$ $p^{(box)}_{cor}(k,m,n,\kmu)=1-p^{(box)}_{err}(k,m,n,\kmu)$ goes to zero at an exponential rate. \cite{Stojnicl1BnBxasymldp}, among other things, settled the box $\ell_1$'s LDP phenomenon and precisely determined the rates of decay of these probabilities. Below we show how the above finite dimensional considerations can be connected to what is presented in an asymptotic scenario in \cite{Stojnicl1BnBxasymldp}.

As in Section \ref{sec:posasym}, we again switch to the asymptotic regime, so that as $n$ is getting large, $k=\beta n$, $m=\alpha n$, $\kmu=(1-\mu)(n-k)$. where $\beta$, $\alpha$, and $\mu$ are fixed constants independent of $n$. Also, we again set $\rho\triangleq \lim_{n\rightarrow\infty}\frac{l}{n}$, $\rho_1\triangleq \lim_{n\rightarrow\infty}\frac{l_1}{n}$, and $\rho_2\triangleq \lim_{n\rightarrow\infty}\frac{l_2}{n}$. Now, let $S^{(box)}_{\rho_1}=S^{(box)}_{\rho_2}=(\max(0,\beta+(1-\mu)(1-\beta)-\rho),\min((1-\mu)(1-\beta),1-\rho))$. Then, as $n\rightarrow\infty$, from (\ref{eq:finalthm1}) we have
\begin{equation}
\lim_{n\rightarrow \infty}\frac{\log(p^{(box)}_{err}(k,m,n,\kmu))}{n}  =  \max\{\max_{\rho\geq \alpha,\rho_1\in S^{(box)}_{\rho_1}}\lim_{n\rightarrow \infty} \frac{\log(\zeta^{(\infty,box)}_1)}{n},\max_{\rho\geq \alpha,\rho_2\in S^{(box)}_{\rho_2}}\lim_{n\rightarrow \infty}\frac{\log(\zeta^{(\infty,box)}_2)}{n}\},
\label{eq:asym1}
\end{equation}
where
\begin{eqnarray} \label{eq:asym2}
  \lim_{n\rightarrow \infty} \frac{\log(\zeta^{(\infty,box)}_1)}{n} &=&
  \lim_{n\rightarrow \infty} \lp\frac{\log(c^{(l,l_1,box)}_{1})}{n}+
  \frac{\log(\phiint(0,F^{(l,l_1,box)}_1))}{n}+\frac{\log(\phiext(F^{(l,l_1,box)}_1,C^{(box)}_w))}{n}\rp \nonumber \\
  \lim_{n\rightarrow \infty} \frac{\log(\zeta^{(\infty,box)}_2)}{n} &=&
  \lim_{n\rightarrow \infty} \lp\frac{\log(c^{(l,l_2,box)}_{2})}{n}+
  \frac{\log(\phiint(0,F^{(l,l_2,box)}_2))}{n}+\frac{\log(\phiext(F^{(l,l_2,box)}_2,C^{(box)}_w))}{n}\rp.\nonumber \\
\end{eqnarray}
From (\ref{eq:intanal4}) and (\ref{eq:intanal5}) we have
\begin{eqnarray}
\lim_{n\rightarrow \infty} \frac{\log(c^{(l,l_1,box)}_{1})}{n} & = & \lim_{n\rightarrow \infty}\frac{\log\binom{\kmu}{l_1}\binom{n-k-\kmu}{n-l-1-l_1}}{n}\nonumber \\
& = & -(1-\mu)(1-\beta) H\lp\frac{\rho_1}{(1-\mu)(1-\beta)}\rp
-\mu(1-\beta)H\lp\frac{1-\rho-\rho_1}{\mu(1-\beta)}\rp,\label{eq:asym3a}
\end{eqnarray}
and
\begin{eqnarray}
\lim_{n\rightarrow \infty} \frac{\log(c^{(l,l_2,box)}_{2})}{n} & = & \lim_{n\rightarrow \infty}\frac{\log\binom{\kmu}{l_2}\binom{n-k-\kmu}{n-l-l_2}}{n}\nonumber \\
& = & -(1-\mu)(1-\beta) H\lp\frac{\rho_1}{(1-\mu)(1-\beta)}\rp
-\mu(1-\beta)H\lp\frac{1-\rho-\rho_1}{\mu(1-\beta)}\rp,\label{eq:asym3b}
\end{eqnarray}
where as earlier
\begin{equation}\label{eq:asym4}
  H(x)=x\log(x)+(1-x)\log(1-x).
\end{equation}
Focusing on $l_2=l_1$ and following Section \ref{sec:posasym} (and ultimately \cite{Stojnicl1RegPosfinn}), from (\ref{eq:int1anal8}) and (\ref{eq:int2anal8}) we have
\begin{eqnarray}
\lim_{n\rightarrow \infty} \frac{\log(\phiint(0,F^{(l,l_1,box)}_1)}{n} & \leq & \lim_{n\rightarrow \infty} \frac{\log(\phiint(0,F^{(l,l_2,box)}_2)}{n}.\label{eq:asym4a}
\end{eqnarray}
Choosing $B_F^{(l,l_1,box)}=B_F^{(l,l_1,bin)}$ we have
\begin{equation}\label{eq:asym4ca}
\phiint(0,F^{(l,l_2,box)}_2)=\phiint(0,B_F^{(l,l_1,box)} F^{(l,l_2,box)}_2).
\end{equation}
and, as stated in Section \ref{sec:posasym}, following line by line the arguments from \cite{Stojnicl1RegPosfinn} we conclude
\begin{equation}\label{eq:asym4h}
\phiint(0,F^{(l,l_2,box)}_2)=\phiint(0,F^{(l,l_1,box)}_2)=\phiint(0,B_F^{(l,l_1,box)} F^{(l,l_1,box)}_2) \leq \phiint(0,F^{(l,l_1,box)}_1).
\end{equation}
From (\ref{eq:asym4a}) and (\ref{eq:asym4h}) we finally obtain
\begin{eqnarray}
\lim_{n\rightarrow \infty} \frac{\log(\phiint(0,F^{(l,l_1,box)}_1)}{n}  =  \lim_{n\rightarrow \infty} \frac{\log(\phiint(0,F^{(l,l_2,box)}_2)}{n}.
\label{eq:asym4i}
\end{eqnarray}
From (\ref{eq:int2anal8}) we also have
\begin{eqnarray}
\phiint(0,F^{(l,l_1,box)}_2) &  = &   \frac{2^{l-k}}{2^{l-k}(2\pi)^{\frac{l}{2}}}\int_{-\1_{1\times l}\w_{l_1+1:l+l_1}\geq 0,\w_{l_1+1:\kmu}\leq 0,\w_{\kmu+k+1:l+l_1}\geq 0}e^{-\frac{\w_{l_1+1:l+l_1}^T\w_{l_1+1:l+l_1}}{2}}d\w_{l_1+1:l+l_1}\nonumber \\
&  = & \frac{1}{2^{l-k}}P(-\1_{1\times l}\w_{l_1+1:l+l_1}\geq 0),\label{eq:asym6}
\end{eqnarray}
where on the right side of the last equality one can think of the elements of $\w_{l_1+1:\kmu}$ as being the i.i.d. negative standard half normals, the elements of $\w_{\kmu+1:\kmu+k}$ as being the i.i.d. standard normals, and the elements of $\w_{k+1:l+l_1}$ as being the i.i.d. standard half normals. The definition of the large deviations principle then gives
\begin{eqnarray}
\lim_{n\rightarrow \infty} \frac{\log(\phiint(0,F^{(l,l_1,box)}_2))}{n} & = &  \lim_{n\rightarrow \infty} \frac{\log(\frac{1}{2^{l-k}}P(-\1_{1\times l}\w_{l_1+1:l+l_1}\geq 0))}{n} \nonumber \\
& =  &  \min_{\mu_y\geq 0} \lim_{n\rightarrow \infty} \frac{\log(\mE e^{-\mu_y\1_{1\times l}\w_{l_+1:l+l_1}})}{n}-(\rho-\beta)\log(2).\label{eq:asym7a}
\end{eqnarray}
We further also have
\begin{eqnarray}
\lim_{n\rightarrow \infty} \frac{\log(\mE e^{-\mu_y\1_{1\times l}\w_{l_+1:l+l_1}})}{n} &  = &
((1-\beta)\mu-(1-\rho-\rho_1)) \log\lp\mE e^{-\mu_y\w_{\kmu+k+1}}\rp \nonumber \\
& & +((1-\mu)(1-\beta)-\rho_1)\log\lp\mE e^{\mu_y\w_{l_1+1}}\rp+\beta\frac{\mu_y^2}{2}\nonumber \\
& = & ((1-\beta)\mu-(1-\rho-\rho_1)) \log\lp\frac{2}{\sqrt{2\pi}}\int_{0}^{\infty} e^{-\frac{\w_{\kmu+k+1}^2}{2}-\mu_y\w_{\kmu+k+1}}d\w_{\kmu+k+1}\rp  \nonumber \\
& & +((1-\mu)(1-\beta)-\rho_1) \log\lp\frac{2}{\sqrt{2\pi}}\int_{0}^{\infty} e^{-\frac{\w_{l_1+1}^2}{2}+\mu_y\w_{l_1+1}}d\w_{l_1+1}\rp)+\beta\frac{\mu_y^2}{2} \nonumber \\
& = & ((1-\beta)\mu-(1-\rho-\rho_1)) \log\lp\erfc\lp\frac{\mu_y}{\sqrt{2}}\rp\rp \nonumber \\
& & +((1-\mu)(1-\beta)-\rho_1)\log\lp\erfc\lp\frac{-
\mu_y}{\sqrt{2}}\rp\rp +\rho\frac{\mu_y^2}{2}\nonumber \\
& = & ((1-\beta)\mu-(1-\rho-\rho_1)) \log(\erfc(\mu_y))\nonumber \\
& & +((1-\mu)(1-\beta)-\rho_1)\log(\erfc(-\mu_y))+\rho\mu_y^2.\label{eq:asym7b}
\end{eqnarray}
A combination of (\ref{eq:posext1anal10}) and (\ref{eq:ext1anal10}) gives
\begin{multline}
\lim_{n\rightarrow \infty} \frac{\log(\phiext(F^{(l,l_1,box)}_1,C^{(box)}_w))}{n} =
\lim_{n\rightarrow \infty} \frac{\log(\phiext(F^{(l,l_1,bin)}_1,C^{(bin)}_w))}{n}\\
 =  \max_{\g_{n-l}\geq 0} \lp -\rho \g_{n-l}^2+(1-\rho-\rho_1)\log\lp \frac{1}{2}\erfc(-\g_{n-l})\rp
 +\rho_1\log\lp \frac{1}{2}\erfc(\g_{n-l})\rp \rp.
\label{eq:asym8}
\end{multline}
From (\ref{eq:ext2anal2}) we have
\begin{eqnarray}
\lim_{n\rightarrow \infty} \frac{\log(\phiext(F^{(l,l_2,box)}_2,C^{(box)}_w))}{n}
 =  \lim_{n\rightarrow \infty} \frac{\log\lp\frac{1}{2^{n-l}}\rp}{n}=-(1-\rho)\log(2).
\label{eq:asym9}
\end{eqnarray}
For $\g_{n-l}=0$ in (\ref{eq:asym8}) we have $\lim_{n\rightarrow \infty} \frac{\log(\phiext(F^{(l,l_1,box)}_1,C^{(box)}_w))}{n}=-(1-\rho)\log(2)$ which implies that  \begin{eqnarray}
\lim_{n\rightarrow \infty} \frac{\log(\phiext(F^{(l,l_2,box)}_2,C^{(box)}_w))}{n}
 \leq  \lim_{n\rightarrow \infty} \frac{\log(\phiext(F^{(l,l_1,box)}_1,C^{(box)}_w))}{n}.
\label{eq:asym10}
\end{eqnarray}
A combination of (\ref{eq:asym1}), (\ref{eq:asym2}), (\ref{eq:asym3a}), (\ref{eq:asym3b}), (\ref{eq:asym4i}), and (\ref{eq:asym10}) gives
\begin{eqnarray}
\lim_{n\rightarrow \infty}\frac{\log(p^{(box)}_{err}(k,m,n,\kmu))}{n} & = & \max\{\max_{\rho\geq \alpha,\rho_1\in S^{(box)}_{\rho_1}}\lim_{n\rightarrow \infty} \frac{\log(\zeta^{(\infty,box)}_1)}{n},\max_{\rho\geq \alpha}\lim_{n\rightarrow \infty}\frac{\log(\zeta^{(\infty,box)}_2)}{n}\}\nonumber \\
& = & \max_{\rho\geq \alpha,\rho_1\in S^{(box)}_{\rho_1}}\lim_{n\rightarrow \infty} \frac{\log(\zeta^{(\infty,box)}_1)}{n}.
\label{eq:asym11}
\end{eqnarray}
Relying on (\ref{eq:asym2}), (\ref{eq:asym3a}), (\ref{eq:asym3b}), (\ref{eq:asym7a}), (\ref{eq:asym7b}), and (\ref{eq:asym8}), one can rewrite (\ref{eq:asym11}) in the following way
\begin{equation}
\lim_{n\rightarrow \infty}\frac{\log(p^{(box)}_{err}(k,m,n,\kmu))}{n}
  =  \max_{\rho\geq \alpha,\rho_1\in S^{(box)}_{\rho_1}}\lim_{n\rightarrow \infty} \frac{\log(\zeta^{(\infty,box)}_1)}{n}
 \max_{\rho\geq \alpha,\rho_1\in S^{(box)}_{\rho_1}} (\psicom+\psiint+\psiext),\label{eq:asym12a}
\end{equation}
where
\begin{eqnarray}
\psicom & = &  -(1-\mu)(1-\beta) H\lp\frac{\rho_1}{(1-\mu)(1-\beta)}\rp
-\mu(1-\beta)H\lp\frac{1-\rho-\rho_1}{\mu(1-\beta)}\rp \nonumber \\
\psiint & = & \min_{\mu_y\geq 0} \lp((1-\beta)\mu-(1-\rho-\rho_1)) \log(\erfc(\mu_y)) +((1-\mu)(1-\beta)-\rho_1)\log(\erfc(-\mu_y))+\rho\mu_y^2\rp\nonumber \\
& & -(\rho-\beta)\log(2)\nonumber \\
\psiext & = & \max_{\g_{n-l}\geq 0} \lp -\rho \g_{n-l}^2+(1-\rho-\rho_1)\log\lp \frac{1}{2}\erfc(-\g_{n-l})\rp
 +\rho_1\log\lp \frac{1}{2}\erfc(\g_{n-l})\rp \rp.\nonumber \\
\label{eq:asym12b}
\end{eqnarray}
Let $\alpha^{(box)}_w$ be the phase-transition value of $\alpha$, i.e. for any $\beta$, let $\alpha^{(box)}_w$ be the critical value of $\alpha$ that ensures $\lim_{n\rightarrow \infty}\frac{\log(p^{(box)}_{err}(k,m,n,\kmu))}{n} =0$ (as \cite{Stojnicl1BnBxasymldp} proves, such an $\alpha$ always exists). Following the reasoning presented in Section \ref{sec:posasym}, the optimal $\rho$ is equal to $\alpha$ and the outer optimization in (\ref{eq:asym12a}) can be removed. This is true for both regimes $\alpha\geq \alpha^{(box)}_w$ and $\alpha\leq \alpha^{(box)}_w$. The only difference is that for  $\alpha\leq \alpha^{(box)}_w$ one looks at the following complementary version of (\ref{eq:anal3})
\begin{equation}
1-2\sum_{l=m-2j-1,j\in \mN_0,l\geq k-1} \sum_{F^{(l,box)}\in \calF^{(l,box)}}\phiint(0,F^{(l,box)})\phiext(F^{(l,box)},C^{(box)}_w)=1-p^{(bin)}_{cor},\label{eq:asym13}
\end{equation}
where $p^{(bin)}_{cor}$ is the probability that the solution of (\ref{eq:l1}) is the box-constrained sparse solution of (\ref{eq:boxl0}) and its decay rate is given by (\ref{eq:asym12a}) and (\ref{eq:asym12b}) with $\rho\leq \alpha$. \cite{Stojnicl1RegPosfinn} uses the same line of reasoning applied for the probability of error to draw the conclusions about the optimality of $\rho$ in the context of the probability of being correct. (\ref{eq:asym12a}) and (\ref{eq:asym12b}) can then be used to numerically determine the PT and LDP curves of the box $\ell_1$. On the other hand, \cite{Stojnicl1BnBxasymldp} goes much further and explicitly solves (\ref{eq:asym12a}) and (\ref{eq:asym12b}).

\section{Conclusion}
\label{sec:conc}

In this paper we considered random linear systems with the so-called binary and box-constrained solutions. We focused on two modification of the standard $\ell_1$ heuristic (which we referred to as the binary and box $\ell_1$). The modifications that we considered are particularly tailored to increase the efficiency of the standard $\ell_1$ when facing additional binary/box structuring. For both, the binary and the box $\ell_1$, we provided a precise finite dimensional performance analysis. The obtained results complement the ones from our companion paper \cite{Stojnicl1BnBxasymldp}, where the same $\ell_1$ modifications were considered in an infinite-dimensional asymptotic regime and their phase-transitions (PT) and large deviations (LDP) were discussed. Given the systematic type of analysis that we presented, various other extensions are possible. They typically assume a few routine adjustments to the main concepts developed here and in a few of our earlier works. For some of the extensions that we view as the most interesting we will in a couple of forthcoming papers present what kind of change in the final results these adjustments produce.

\begin{singlespace}
\bibliographystyle{plain}
\bibliography{l1bnbxfinn1Refs}
\end{singlespace}

\end{document}